\documentclass[letterpaper, 10 pt, conference]{ieeeconf}

\IEEEoverridecommandlockouts
\usepackage{graphics}
\graphicspath{{./Figures/}}
\usepackage{epsfig}
\usepackage{times}
\usepackage{amsmath}
\usepackage{amssymb}
\usepackage{mathrsfs} % for \mathscr
\usepackage{subcaption}
\usepackage{wrapfig}
\usepackage{comment}
\usepackage{booktabs}
\usepackage{multirow}
\usepackage{mathtools} % for \coloneqq, etc.
\usepackage{bm}        % for bold math
\usepackage{xcolor}

\usepackage{algorithm}
\usepackage[indLines]{algpseudocodex}
\algrenewcommand\algorithmiccomment[2][0em]{%
  \hspace*{#1}\textcolor{gray}{\footnotesize$\triangleright$~\textit{#2}}%
}

\algtext*{EndIf}
\algtext*{EndWhile}

\renewcommand{\d}{\mathrm{d}}
\newcommand{\E}{\mathbb{E}}
\renewcommand{\P}{\mathbb{P}}
\newcommand{\R}{\mathbb{R}}

\renewcommand{\S}{\mathbb{S}}

\newcommand{\scriptN}{\mathcal{N}}
\newcommand{\scriptX}{\mathcal{X}}
\newcommand{\scriptU}{\mathcal{U}}
\newcommand{\scriptJ}{\mathcal{J}}
\newcommand{\scriptF}{\mathcal{F}}
\newcommand{\scriptP}{\mathcal{P}}

\newcommand{\scriptD}{\mathcal{D}}
\newcommand{\scriptC}{\mathcal{C}}
\newcommand{\scriptQ}{\mathcal{Q}}
\newcommand{\scriptL}{\mathcal{L}}

\newcommand{\vct}[1]{\boldsymbol{#1}}

\newtheorem{thm}{Theorem}
\newtheorem{assum}{Assumption}

\newtheorem{problem}{Problem}

\newtheorem{rem}{Remark}

\newtheorem{lem}{Lemma}

\title{\LARGE \bf
Free Final Time Adaptive Mesh Covariance Steering \\ via Sequential Convex Programming
}

\author{Joshua Pilipovsky%
\thanks{J. Pilipovsky is a Senior Research Engineer at RTX Technology Research Center (RTRC), East Hartford, CT, 06119, USA. Email: joshua.y.pilipovsky@rtx.com}
}

\begin{document}

\maketitle
\thispagestyle{empty}
\pagestyle{empty}

%%%%%%%%%%%%%%%%%%%%%%%%%%%%%%%%%%%%%%%%%%%%%%%%%%%%%%%%%%%%%%%%%%%%%%%%%%%%%%%%
\begin{abstract}
In this paper we develop a sequential-convex-programming (SCP) framework for \emph{free-final-time} covariance steering of nonlinear stochastic differential equations (SDEs) subject to both additive and multiplicative diffusion.
We cast the free-final-time objective through a time-normalization and introduce per-interval time-dilation variables that induce an \emph{adaptive} discretization mesh, enabling the simultaneous optimization of the control policy and the temporal grid.
A central difficulty is that, under multiplicative noise, accurate covariance propagation within SCP requires retaining the first-order diffusion linearization and its coupling with time dilation.
We therefore derive the exact local linear stochastic model (preserving the multiplicative structure) and introduce a tractable discretization that maintains the associated diffusion terms, after which each SCP subproblem is solved via conic/semidefinite covariance-steering relaxations with terminal moment constraints and state/control chance constraints.
Numerical experiments on a nonlinear double-integrator with drag and velocity-dependent diffusion validate free-final-time minimization through adaptive time allocation and improved covariance accuracy relative to frozen-diffusion linearizations.
\end{abstract}

%%%%%%%%%%%%%%%%%%%%%%%%%%%%%%%%%%%%%%%%%%%%%%%%%%%%%%%%%%%%%%%%%%%%%%%%%%%%%%%%
\section{INTRODUCTION}
Successive convexification (SCvx) and, more broadly, sequential convex programming (SCP) methods have become a leading paradigm for nonconvex optimal control due to their polynomial-time convex subproblems, strong empirical reliability, and increasingly mature convergence/feasibility guarantees \cite{Mao2018SCvx,Oguri2023ALSCvx,Elango2025ctSCvx}. In aerospace guidance and trajectory optimization, these methods have enabled real-time-capable pipelines for problems with nonlinear dynamics, nonconvex state/control constraints, and free-final-time objectives \cite{Szmuk2018FreeFinalTime}. In their standard form, SCvx discretizes the continuous-time problem on a fixed temporal mesh, convexifies the dynamics and nonconvex constraints about a reference trajectory, and enforces progress through trust-region and penalty mechanisms (e.g., virtual controls and constraint softening).

Extending this paradigm to \emph{stochastic} optimal control is substantially more challenging because one must simultaneously steer the mean trajectory and propagate/control uncertainty under probabilistic safety requirements. Covariance control (and, in finite horizon form, covariance steering (CS)) addresses this by shaping the first two moments of the state distribution via feedback control, and has led to convex, scalable synthesis methods for linear stochastic systems with terminal covariance and chance constraints \cite{Bakolas2018dtCS,Chen2016ctCS,Okamoto2018CCS,Liu2025dtCS}. These developments have supported a growing body of constrained stochastic guidance applications, including spacecraft proximity operations and low-thrust trajectory design \cite{Pilipovsky2021IRA,Kumagai2025Cislunar,Pilipovsky2020Interplanetary}.

A natural next step is to combine the tractability and reliability of SCvx with the distribution-shaping guarantees of CS for \emph{nonlinear} stochastic systems. Iterative covariance steering (iCS) \cite{Kumagai2025Cislunar,Ridderhof2019iCS,Oguri2022iCS,Benedikter2022iCS} follows this strategy by repeatedly linearizing a nonlinear stochastic differential equation (SDE), solving a convex CS subproblem for the local model, and updating the reference within an SCP loop. For additive noise, local covariance propagation is relatively straightforward; however, many applications are more naturally modeled with \emph{multiplicative} noise, where the diffusion depends on the state and/or control (e.g., atmospheric-density-driven entry uncertainty, attitude-dependent disturbances, and throttle-/pointing-dependent propulsion uncertainty). In this setting, covariance dynamics depend on the first-order diffusion linearization, and the resulting CS constraints can become nonconvex unless suitable liftings/relaxations are introduced \cite{Balci2022MixedMN,Knaup2023ParametricCS}. Moreover, within SCP/iCS, mismatch between the covariance propagation used in the convex subproblem and the true local stochastic dynamics can accumulate and degrade constraint satisfaction and convergence.

Many guidance and mission-design problems are also intrinsically \emph{free-final-time}. In deterministic SCvx, this is commonly handled by time normalization and a time-dilation variable, which preserves a fixed number of discretization nodes while introducing a convex surrogate for final time \cite{Szmuk2018FreeFinalTime}. Adaptive-mesh SCP variants further generalize this idea by optimizing interval-wise time dilation or reallocating discretization density \cite{KumagaiOguri2024AdaptiveMesh,Elango2025ctSCvx,ZHOU2021AdaptiveMesh,Tafazzol2025AdaptiveMesh}. In stochastic problems, however, time dilation scales not only the drift but also the diffusion, and therefore directly affects covariance growth. To our knowledge, a principled CS formulation that jointly optimizes the temporal mesh and the covariance steering policy for nonlinear SDEs with multiplicative noise has not been previously developed. The purpose of this paper is to close this gap.

We consider a nonlinear continuous-time SDE with control-affine (possibly nonlinear) drift and state/control-dependent diffusion, subject to terminal mean/covariance requirements and convex chance constraints along the trajectory, and seek a feedback policy that \emph{minimizes final time}. We time-normalize the dynamics to $\tau\in[0,1]$ and introduce per-interval time-dilation variables $\{\sigma_k\}_{k=0}^{N-1}$, which induce an adaptive physical-time mesh while keeping the normalized discretization fixed. A key technical ingredient is a first-order discretization of the local linear multiplicative-noise model that preserves the diffusion linearization \emph{including its time-dilation dependence}, together with one-step mean-square error bounds. This yields more accurate covariance propagation inside the SCP loop under multiplicative noise and enables the integration of \emph{lossless} convex relaxations for multiplicative-noise CS within each convex subproblem.

\subsection*{Contributions}
The main contributions of this work are as follows.
\begin{enumerate}
    \item To the authors' knowledge, the first free-final-time covariance-steering formulation for nonlinear SDEs, using interval-wise time-dilation variables to jointly optimize the control policy and temporal mesh within an SCP/iCS framework.
    \item An exact first-order diffusion linearization under time normalization (including time-dilation coupling), together with a discrete-time local stochastic model and mean-square error bounds that support accurate covariance propagation under multiplicative noise.
    \item A tractable SCvx$^\star$-based solution method whose convex subproblems incorporate lossless convex relaxations for multiplicative-noise covariance steering and convex chance constraints, demonstrated on a nonlinear double-integrator example with drag and velocity-dependent diffusion.
\end{enumerate}

\section{NOTATION}\label{sec:notation}
All random objects are defined on a common probability space $(\Omega,\scriptF,\P)$.
Deterministic vectors are denoted by lowercase letters ($u\in\R^{m}$), matrices by uppercase letters ($V\in\R^{n\times m}$), and random vectors by boldface ($\vct x\in\R^{n}$). We denote the sets of symmetric, positive semidefinite, and positive definite matrices by $\S^n$, $\S^n_{+}$, and $\S^n_{++}$, respectively.
A continuous-time signal over $t\in[t_0,t_1]$ is written $\{x_t\}_{t\in[t_0,t_1]}$. Given a filtration $\{\scriptF_\tau\}_{\tau\ge 0}$, we write $\E_k[\cdot]\coloneqq \E[\cdot\mid \scriptF_{\tau_k}]$.
For $\zeta\in\R^r$, $[\zeta]_+ \coloneqq [\max(\zeta_1,0),\ldots,\max(\zeta_r,0)]^\intercal$ represents the vectorized maximum.
For non-negative quantities $a$ and $b$, we write $a\lesssim b$ if there exists a finite constant $C>0$ such that $a\le Cb.$
Finally, $[a;b]$ and $[A;B]$ denote vertical concatenation of compatible vectors and matrices.

%%%%%%%%%%%%%%%%%%%%%%%%%%%%%%%%%%%%%%%%%%%%%%%%%%%%%%%%%%%%%%%%%%%%%%%%%%%%%%%%
\section{PROBLEM STATEMENT}\label{sec:problem_statement}

Consider a dynamical system governed by the nonlinear stochastic differential equation (SDE)
\begin{equation}\label{eq:nl-sde}
    \d\vct x_t = f\big(\vct x_t, \vct u_t\big)\, \d t + g\big(\vct x_t, \vct u_t\big)\, \d \vct w_t, \quad t \in [0,t_f],
\end{equation}
where $\vct x_t \in \R^{n}$ is the state, $\vct u_t \in \R^{m}$ is the control input, $\vct w_t\in\R^{d}$ is a $d$-dimensional Brownian motion, and $t_f > 0$ is the final time.
In this work, we assume that $t_f$ is \emph{free}.
The state is initially normally distributed as
\begin{equation}\label{eq:init-cond}
    \vct x_0 \sim \scriptN(\mu_i, \Sigma_i),
\end{equation}
where $\mu_i \in \R^{n}$ and $\Sigma_i \in \S_{++}^{n}$, and the terminal distribution is constrained to be
\begin{equation}\label{eq:term-cond}
    \vct x_{t_f} \sim \scriptN(\mu_f, \Sigma_f),
\end{equation}
where $\mu_f \in \R^{n}$ and $\Sigma_f \in \S_{++}^{n}$.
Let $\scriptX \subset \R^{n}$ and $\scriptU \subset \R^{m}$ denote the allowable convex sets for state and input, which we assume are given by the polytopes
\begin{equation}~\label{eq:half-spaces}
    \scriptX = \bigcap_{j=1}^{N_x} \{x : \alpha_j^\intercal x + \beta_j \leq 0\},\ \scriptU = \bigcap_{j=1}^{N_u} \{u : a_j^\intercal u + b_j \leq 0\},
\end{equation}
where $\alpha_j\in\R^n,a_j\in\R^m$ and $\beta_j,b_j \in \R$ define the individual half-spaces.
The above choice is for simplicity; other convex (e.g., norm-based) or even nonconvex (e.g., obstacle-avoidance) admissible regions can be handled within the SCP framework in Section~\ref{sec:scp}~\cite{Oguri2022iCS,Lew2020ccSCP}.
We impose continuous-time joint chance constraints
\begin{subequations}\label{eq:ct-cc}
    \begin{align}
        \P(\vct x_t \in \scriptX \ \forall t \in [0,t_f]) &\geq 1 - \Delta_x, \\
        \P(\vct u_t \in \scriptU \ \forall t \in [0,t_f]) &\geq 1 - \Delta_u,
    \end{align}
\end{subequations}
where $\Delta_x, \Delta_u \in (0, 0.5]$ are joint risk tolerances.
We consider an objective function which aims to minimize the regularized final time
\begin{equation}\label{eq:ct-obj}
    \scriptJ(\vct x, \vct u, t_f) = \eta t_f + \scriptJ_{\mathrm{reg}}(\vct x, \vct u),
\end{equation}
where $\eta > 0$ weighs the free-final time, and $\scriptJ_{\mathrm{reg}}\geq 0$ denotes a mean/covariance regularization which will be precisely defined in Section~\ref{subsec:convex-reduction}.
We define the set $\pi$ of \textit{admissible} control inputs as the set of (random) control signals $\{\vct u_t\}_{t\in[0,t_f]}$ where the input $\vct u_t$ is an affine function of the state.
In total, the \textbf{F}ree-\text{F}inal \textbf{T}ime \textbf{C}ovariance \textbf{S}teering (FFT-CS) is given as follows
\begin{problem}[FFT-CS]\label{prob:FFT-CS}
    For a given initial distribution \eqref{eq:init-cond}, find a control policy that minimizes \eqref{eq:ct-obj} subject to \eqref{eq:nl-sde} and \eqref{eq:ct-cc}, such that \eqref{eq:term-cond} holds.
\end{problem}
%

%%%%%%%%%%%%%%%%%%%%%%%%%%%%%%%%%%%%%%%%%%%%%%%%%%%%%%%%%%%%%%%%%%%%%%%%%%%%%%%%
\section{PROBLEM REFORMULATION}\label{sec:reformulation}~\label{sec:prob-reform}
To find a locally optimal solution to Problem~\ref{prob:FFT-CS}, we utilize the framework of Sequential Convex Programming (SCP), which aims to iteratively build and solve approximate (yet tractable) convex sub-problems of Problem~\ref{prob:FFT-CS}.
The overall iterative scheme is presented in Algorithm~\ref{alg:scvxstar} and is discussed in detail in Section~\ref{sec:scp}.
In this section, we outline the main steps that must be performed to arrive at a tractable convex CS sub-problem, namely (i) time scaling, (ii) linearization, (iii) discretization, and (iv) convex reduction.

\subsection{Time Scaling}
Since the final time $t_f$ is free, it is convenient (and in this context, necessary) to scale the time horizon $[0, t_f]$ to a fixed interval.
Let $t:[0,1]\rightarrow [0,t_f]$ be a continuously differentiable and nondecreasing function of a parameter $\tau$ with $t(0)=0$ and $t(1)=t_f$, and define the \emph{time-dilation} factor $\sigma_\tau \coloneqq \d t/\d\tau \ge 0$.
Under this change of variables, the dynamics \eqref{eq:nl-sde} are equivalently written on the \emph{fixed} interval $\tau\in[0,1]$ as
\begin{equation}\label{eq:nl-sde-scaled}
    \d\vct x_{\tau} = \sigma_\tau f(\vct x_\tau, \vct u_\tau)\,\d\tau + \sqrt{\sigma_\tau}\, g(\vct x_\tau, \vct u_\tau)\,\d\vct w_{\tau}.
\end{equation}
Moreover, it is straightforward to see that
\begin{equation}\label{eq:tf-sigma}
    t_f=\int_{0}^{1} \sigma_\tau\,\d\tau,
\end{equation}
so the objective \eqref{eq:ct-obj} is equivalently written as
\begin{equation}\label{eq:obj-time-dilation}
    \scriptJ(\vct x, \vct u, \sigma) = \eta \int_{0}^{1} \sigma_\tau\, \d\tau + \scriptJ_{\mathrm{reg}}(\vct x, \vct u).
\end{equation}
The nonlinear SDE \eqref{eq:nl-sde-scaled} and objective \eqref{eq:obj-time-dilation} are now written over a fixed horizon, at the cost of introducing the additional decision variable $\sigma_\tau$.
This time scaling procedure, which is now customary in deterministic optimal control SCP algorithms~\cite{Szmuk2018FreeFinalTime,Elango2025ctSCvx,Kamath2023seco}, has scarcely been applied to stochastic optimal control problems.

Throughout, the normalized state process $\{\vct x_\tau\}_{\tau\in[0,1]}$ is assumed adapted to the natural filtration
\begin{equation}~\label{eq:nat-filt}
    \mathcal F_\tau \;\coloneqq\; \sigma\!\big(\vct x_0,\{\vct w_s:0\le s\le \tau\}\big),\qquad \tau\in[0,1],
\end{equation}
generated by the initial condition $\vct x_0$ and the driving Brownian motion $\d\vct w_{\tau}$.
For convenience, explicit expressions for the linearization and discretization matrices that appear in the sequel are collected in Appendix~\ref{app:ct-lin} and Appendix~\ref{app:exact-disc}.

\subsection{Linearization}~\label{subsec:lin}
We linearize \eqref{eq:nl-sde-scaled} about a reference trajectory $\hat z_{\tau} \coloneqq (\hat x_{\tau}, \hat u_{\tau}, \hat\sigma_{\tau})$.
For clarity, write the diffusion column-wise as $g=[g_1,\dots,g_d]\in\R^{n\times d}$.
%
%\begin{align}\label{eq:linear-sde-scaled}
%    &\d\vct x_{\tau} = \left(A_{\tau} \vct x_{\tau} + B_{\tau} \vct u_{\tau} + c_{\tau} \sigma_{\tau} + d_{\tau}\right)\, \d\tau \nonumber \\
%    &\quad+ \sum_{i=1}^{d} \left(\tilde{A}_{\tau}^{(i)} \vct x_{\tau} + \tilde{B}_{\tau}^{(i)} \vct u_{\tau} + \tilde{c}_{\tau}^{(i)} \sigma_{\tau} + \tilde{d}_{\tau}^{(i)}\right)\, \d\vct w_{\tau}^{(i)}.
%\end{align}
Define the augmented control input $\tilde{\vct u}_{\tau} \coloneqq [\vct u_{\tau}^\intercal, \sigma_{\tau}]^\intercal\in\R^{m+1}$ and the matrices
\begin{equation*}
    F_\tau \coloneqq \begin{bmatrix}B_\tau & c_\tau\end{bmatrix},\qquad
    \tilde F_\tau^{(i)} \coloneqq \begin{bmatrix}\tilde B_\tau^{(i)} & \tilde c_\tau^{(i)}\end{bmatrix}.
\end{equation*}
Then the first-order It\^{o} linearization yields
\begin{align}\label{eq:linear-sde-aug}
    \d\vct x_{\tau} &= (A_\tau \vct x_\tau + F_\tau \tilde{\vct u}_\tau + d_\tau)\, \d\tau \nonumber\\
    &\hspace{1cm}+ \sum_{i=1}^{d} \left(\tilde{A}_{\tau}^{(i)} \vct x_\tau + \tilde{F}_\tau^{(i)} \tilde{\vct u}_\tau + \tilde{d}_{\tau}^{(i)}\right)\, \d\vct w_\tau^{(i)}.
\end{align}
The matrices and vectors involved in this linearization are given explicitly in Appendix~\ref{app:ct-lin}.

\begin{rem}\label{rem:exact-diff-lin}
In contrast to prior SCP/iCS formulations~\cite{Oguri2022iCS,Benedikter2022iCS,Kumagai2025Cislunar,Ridderhof2019iCS} that \emph{freeze} the diffusion at the reference (i.e., $\tilde A_\tau^{(i)}=\tilde F_\tau^{(i)}=0$), \eqref{eq:linear-sde-aug} retains the full first-order diffusion linearization, including the time-dilation dependence through $\sqrt{\sigma_\tau}$.
This yields more accurate local moment propagation inside SCP, at the cost of a multiplicative-noise linear model (and hence generally non-Gaussian state distributions).
\end{rem}

\subsection{Discretization}
We discretize \eqref{eq:linear-sde-aug} on a partition $\scriptP \coloneqq (\tau_0, \ldots, \tau_{N})$ with $\tau_0 = 0$, $\tau_N = 1$ and step sizes $\Delta\tau_k \coloneqq \tau_{k+1}-\tau_k$.
It is important to note that even if the time-scaled step size is fixed, i.e., $\Delta \tau_k \equiv \Delta\tau$, the physical time step need not be constant, since $\Delta t_k = \int_{0}^{\Delta\tau} \sigma_\tau\, \d\tau$.
For brevity, we write $\vct x_{t_k} = \vct x_k,\, \vct u_{t_k} = \vct u_k$, and so on for all other decision variables.
We assume a zero-order hold (ZOH) on the augmented control input, that is,
\begin{equation}\label{eq:zoh}
    \tilde{\vct u}_\tau \equiv \tilde{\vct u}_{k}, \qquad \forall \tau \in [\tau_k, \tau_{k+1}).
\end{equation}
Under ZOH on $\sigma_\tau$, the objective \eqref{eq:obj-time-dilation} is approximated by the Riemann sum
\begin{equation}\label{eq:obj-riemann}
    \scriptJ(\vct x_k, \tilde{\vct u}_k) \approx \eta \sum_{k=0}^{N-1} \sigma_k \Delta \tau_k + \scriptJ_{\mathrm{reg}}(\vct x_k, \vct u_k),
\end{equation}
which is linear in the decision variables $\{\sigma_k\}$.
Let $\Phi(\tau,s)$ denote the state transition matrix (STM) associated with $\dot z = A_\tau z$, i.e.,
\begin{equation}\label{eq:stm-def}
    \frac{\partial}{\partial \tau}\Phi(\tau,s)=A_\tau\Phi(\tau,s), \qquad \Phi(s,s)=I.
\end{equation}
Under the ZOH assumption \eqref{eq:zoh}, the linear SDE \eqref{eq:linear-sde-aug} admits the \textit{exact} mild solution on $[\tau_k,\tau_{k+1}]$:
\begin{align}
    &\vct x_{k+1}
    = A_k \vct x_k + \vct{\nu}_k \nonumber\\
    &\quad+ \sum_{i=1}^{d} \int_{\tau_k}^{\tau_{k+1}}
    \Phi(\tau_{k+1},\tau)\Big(\tilde{A}_\tau^{(i)}\vct x_\tau + \tilde{\vct{\nu}}_\tau^{(i)}\Big)\,\d\vct w_\tau^{(i)},~\label{eq:exact-discrete}
\end{align}
where $\vct\nu_k \coloneq F_k \tilde{\vct u}_k + d_k,\, \tilde{\vct\nu}_\tau^{(i)} \coloneq \tilde{F}_\tau^{(i)} \tilde{\vct u}_k + \tilde{d}_\tau^{(i)}$. The deterministic discretization terms $(A_k,F_k,d_k)$ together with the averaged diffusion coefficients $(\tilde A_k^{(i)},\tilde F_k^{(i)},\tilde d_k^{(i)})$ are given in Appendix~\ref{app:exact-disc}.
Therein, we also present a computationally efficient (and parallelizable) procedure to compute these matrices by solving a small augmented linear ODE, avoiding per-interval numerical quadrature.
The only intractable term in \eqref{eq:exact-discrete} is the stochastic integral, since $\vct x_\tau$ appears inside the integrand.
In the following, we overcome this issue by (i) freezing the state inside the integrand and (ii) approximating with a constant integrand.

\begin{assum}[Frozen state in the diffusion integrand]\label{assum:freeze-diff}
On each interval $[\tau_k,\tau_{k+1}]$, we approximate the diffusion integrand in \eqref{eq:exact-discrete} by freezing the state at the left endpoint:
\begin{equation}\label{eq:freeze-x}
    \vct x_\tau \approx \vct x_k,\qquad \forall \tau\in[\tau_k,\tau_{k+1}].
\end{equation}
\end{assum}

For each channel $i\in\{1,\dots,d\}$ and $\tau \in [\tau_k, \tau_{k+1}]$, define the $\scriptF_{\tau_k}$-measurable integrand
\begin{equation}\label{eq:def-H}
    H_{k,i}(\tau;\vct x_k,\tilde{\vct u}_k)
    \coloneqq \Phi(\tau_{k+1},\tau)\Big(\tilde{A}^{(i)}_\tau \vct x_k + \tilde{\vct{\nu}}^{(i)}_\tau\Big)\in\R^{n}.
\end{equation}
Stack the channels as
\begin{align}
    \hspace{-0.3cm}H_k(\tau;\vct x_k,\tilde{\vct u}_k) &\coloneqq \begin{bmatrix} H_{k,1}(\tau) & \cdots & H_{k,d}(\tau)\end{bmatrix}\in \R^{n\times d},\label{eq:Hstack}\\
    \Delta\vct w_k &\coloneqq \begin{bmatrix}\Delta\vct w_{k,1} & \cdots & \Delta\vct w_{k,d}\end{bmatrix}^\intercal\in\R^{d},\label{eq:dwstack}
\end{align}
where $\Delta\vct w_{k,i}\coloneqq \vct w^{(i)}_{\tau_{k+1}}-\vct w^{(i)}_{\tau_k}$ and hence $\Delta\vct w_k \sim \scriptN(0,\Delta\tau_k I_d)$ with statistics
\begin{equation}\label{eq:dw-moments}
    \E[\Delta\vct w_k]=0,\qquad \E[\Delta\vct w_k\Delta\vct w_k^\intercal]=\Delta\tau_k I_d.
\end{equation}
Moreover, since $\vct x_k$ is $\scriptF_{\tau_k}$-measurable and Brownian increments are independent of $\scriptF_{\tau_k}$, we have $\Delta\vct w_k \perp\!\!\!\perp \scriptF_{\tau_k}$ (and hence $\Delta\vct w_k$ is independent of $\vct x_k$).
%Moreover, since $\Delta\vct w_k$ is independent of $\scriptF_{\tau_k}$, it is independent of $\vct x_k$
%
Furthermore, define the averaged (projected) integrand
\begin{equation}~\label{eq:Hbar}
    \hspace{-0.3cm}\bar H_{k}(\vct x_k,\tilde{\vct u}_k)
    \coloneqq \frac{1}{\Delta\tau_k}\int_{\tau_k}^{\tau_{k+1}} H_{k}(\tau;\vct x_k,\tilde{\vct u}_k)\,\d\tau\in\R^{n\times d}.
\end{equation}
We approximate the It\^{o} integral in \eqref{eq:exact-discrete} by the constant-integrand It\^{o} integral as
\begin{align}
    \int_{\tau_k}^{\tau_{k+1}} H_{k}(\tau;\vct x_k,\tilde{\vct u}_k)\,\d \vct w_\tau
    &\approx \int_{\tau_k}^{\tau_{k+1}} \bar H_{k}(\vct x_k,\tilde{\vct u}_k)\,\d \vct w_\tau \nonumber \\
    &= \bar H_{k}(\vct x_k,\tilde{\vct u}_k)\,\Delta \vct w_{k}. \label{eq:proj-approx}
\end{align}
Note that $\bar H_k(\cdot)\in\R^{n\times d}$ is column-wise affine in $(\vct x_k,\tilde{\vct u}_k)$, i.e., for each channel $i$,
\begin{equation}\label{eq:Hbar-affine-cols}
    \bar H_{k,i}(\vct x_k,\tilde{\vct u}_k)
    =
    \tilde A_{k}^{(i)} \vct x_k + \tilde F_{k}^{(i)} \tilde{\vct u}_k + \tilde d_{k}^{(i)}.
\end{equation}
%The corresponding matrices and vectors are given in Appendix~\ref{app:exact-disc}.
Combining Assumption~\ref{assum:freeze-diff} with \eqref{eq:proj-approx}, we obtain the tractable discrete-time approximation of \eqref{eq:exact-discrete}
\begin{equation}\label{eq:approx-discrete-general}
    \boxed{
    \vct x_{k+1}
    \approx
    A_k \vct x_k + \vct{\nu}_k
    + \bar H_{k}(\vct x_k,\tilde{\vct u}_k)\,\Delta \vct w_{k}.
    }
\end{equation}

The approximation \eqref{eq:proj-approx} is optimal (in conditional mean-square) within the class of constant integrands; see Appendix~\ref{app:proj-opt}.
Naturally, one may wonder how much error the above approximation scheme incurs compared to exact discretization.
We now formalize one-step mean-square error bounds for (i) freezing $\vct x_\tau\mapsto \vct x_k$ inside the diffusion integrand, and (ii) projecting the frozen It\^{o} integral onto $\bar H_k\,\Delta\vct w_k$.

\begin{thm}[One-step diffusion discretization error]\label{thm:one-step-error}
Fix the time step $k\in\{0,\ldots,N-1\}$ and consider the exact mild update $\vct x_{k+1}^{\rm ex}$ in \eqref{eq:exact-discrete}.
Let
\begin{equation*}
    \vct x_{k+1}^{\rm fr} \coloneqq A_k \vct x_k + \vct{\nu}_k + \int_{\tau_k}^{\tau_{k+1}} H_k(\tau;\vct x_k,\tilde{\vct u}_k)\,\d \vct w_\tau,
\end{equation*}
denote the \textit{frozen} state update, and let $\vct x_{k+1}^{\rm ap}$ denote the \textit{projected} (fully approximated) state update in \eqref{eq:approx-discrete-general}.
Denote the associated errors $\vct e_{k+1}^{(x)}\coloneqq \vct x_{k+1}^{\rm ex}-\vct x_{k+1}^{\rm fr}$ (freezing) and $\vct e_{k+1}^{(p)}\coloneqq \vct x_{k+1}^{\rm fr}-\vct x_{k+1}^{\rm ap}$ (projection).
Assume Assumption~\ref{assum:local-C1} (in Appendix~\ref{app:assums-and-proofs}) holds on $\mathcal{T}_k=[\tau_k,\tau_{k+1}]$.
Then the conditional mean-square errors satisfy
\begin{subequations}\label{eq:thm-bounds}
\begin{align}
    \E_k\!\left[\|\vct e_{k+1}^{(x)}\|^2\right]
    &\lesssim \Delta\tau_k^{2}\left(1+\|\vct x_k\|^2+\|\tilde{\vct u}_k\|^2\right), \label{eq:freeze-bound}\\
    \E_k\!\left[\|\vct e_{k+1}^{(p)}\|^2\right]
    &\le
    \frac{1}{12}\,L_{H,k}(\vct x_k,\tilde{\vct u}_k)^2\,\Delta\tau_k^{3}, \label{eq:proj-bound}
\end{align}
\end{subequations}
where $L_{H,k}(\vct x_k,\tilde{\vct u}_k)$ is the one-step time-Lipschitz modulus of the map $\tau\mapsto H_k(\tau;\vct x_k,\tilde{\vct u}_k)$.
\end{thm}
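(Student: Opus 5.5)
Both bounds come from the It\^{o} isometry applied conditionally on $\scriptF_{\tau_k}$, so the plan is to write each error as a single It\^{o} integral and estimate its integrand. The local regularity hypotheses of Assumption~\ref{assum:local-C1} supply the constants in $\lesssim$: continuity and boundedness of $A_\tau$, $\tilde A^{(i)}_\tau$, $\tilde F^{(i)}_\tau$, $\tilde d^{(i)}_\tau$ and hence of $\Phi(\tau_{k+1},\tau)$ on $\mathcal T_k$.

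\textbf{Freezing error.} Subtracting the two updates, the drift parts $A_k\vct x_k+\vct\nu_k$ cancel. This leaves
\[
\vct e^{(x)}_{k+1}=\sum_{i=1}^d\int_{\tau_k}^{\tau_{k+1}}\Phi(\tau_{k+1},\tau)\tilde A^{(i)}_\tau(\vct x_\tau-\vct x_k)\,\d\vct w^{(i)}_\tau .
\]
By the conditional It\^{o} isometry, the independence of the channels, and the bound $\|\Phi\,\tilde A^{(i)}\|\lesssim 1$,
\[
\E_k\|\vct e^{(x)}_{k+1}\|^2\lesssim\int_{\tau_k}^{\tau_{k+1}}\E_k\|\vct x_\tau-\vct x_k\|^2\,\d\tau .
\]
The key step is a short-time moment estimate for the linear SDE \eqref{eq:linear-sde-aug} under ZOH:
\[
\E_k\|\vct x_\tau-\vct x_k\|^2\lesssim(\tau-\tau_k)\bigl(1+\|\vct x_k\|^2+\|\tilde{\vct u}_k\|^2\bigr).
\]
To get it, write $\vct x_\tau-\vct x_k$ as a drift integral plus a diffusion integral. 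Use $\|a+b\|^2\le2\|a\|^2+2\|b\|^2$, apply Cauchy--Schwarz to the drift integral (giving a factor $(\tau-\tau_k)^2$) and the isometry to the diffusion integral (giving a factor $(\tau-\tau_k)$). Both integrands are bounded by $1+\E_k\|\vct x_s\|^2+\|\tilde{\vct u}_k\|^2$, using linear growth of the affine coefficients. A Gr\"onwall argument gives $\sup_{s\in\mathcal T_k}\E_k\|\vct x_s\|^2\lesssim 1+\|\vct x_k\|^2+\|\tilde{\vct u}_k\|^2$, uniformly since $\Delta\tau_k\le1$. Integrating $(\tau-\tau_k)$ over the interval then yields the $\Delta\tau_k^2$ factor in \eqref{eq:freeze-bound}. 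I expect this moment/Gr\"onwall step to be the main obstacle, because the constants have to be tracked so that they depend only on the coefficient bounds from Assumption~\ref{assum:local-C1} and not on $k$ or $\vct x_k$.

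\textbf{Projection error.} Here
\[
\vct e^{(p)}_{k+1}=\int_{\tau_k}^{\tau_{k+1}}\bigl(H_k(\tau)-\bar H_k\bigr)\,\d\vct w_\tau ,
\]
and the integrand is $\scriptF_{\tau_k}$-measurable and deterministic given $\scriptF_{\tau_k}$. The isometry therefore gives exactly
\[
\E_k\|\vct e^{(p)}_{k+1}\|^2=\int_{\tau_k}^{\tau_{k+1}}\|H_k(\tau)-\bar H_k\|_F^2\,\d\tau ,
\]
taking $L_{H,k}$ with respect to the Frobenius norm. Write $\bar H_k-H_k(\tau)=\frac1{\Delta\tau_k}\int(H_k(s)-H_k(\tau))\,\d s$, so that $\|H_k(\tau)-\bar H_k\|_F\le L_{H,k}\frac1{\Delta\tau_k}\int|s-\tau|\,\d s$. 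This route gives constant $1/3$, which is too crude. To obtain the sharp $1/12$, instead note that $\bar H_k$ is the $L^2$-optimal constant (Appendix~\ref{app:proj-opt}), so the error is at most the one for the constant $H_k(\tau_m)$ at the midpoint $\tau_m$. Lipschitz continuity gives $\|H_k(\tau)-H_k(\tau_m)\|_F\le L_{H,k}|\tau-\tau_m|$, and $\int_{\tau_k}^{\tau_{k+1}}(\tau-\tau_m)^2\,\d\tau=\Delta\tau_k^3/12$. This yields \eqref{eq:proj-bound}.
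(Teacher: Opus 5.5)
Your proof is correct. The freezing-error half is the paper's argument. Once the drift terms cancel, the conditional It\^{o} isometry and the bound on $\Phi(\tau_{k+1},\tau)\tilde A^{(i)}_\tau$ reduce everything to the short-time increment estimate, which the paper isolates as Lemma~\ref{lem:local-moment-increment}. The paper gets the second-moment bound from It\^{o}'s formula for $\|\vct x_s\|^2$ plus Gr\"onwall rather than from the integral equation; this difference is immaterial. Also, $k$-dependent constants are harmless here, since $k$ is fixed in the statement.

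The projection half reaches the sharp constant by a different route. The paper proves Lemma~\ref{lem:lipschitz-variance} via the exact identity $\int_0^h\|\psi-\bar\psi\|^2\,\d t=\frac{1}{2h}\int_0^h\!\int_0^h\|\psi(t)-\psi(s)\|^2\,\d s\,\d t$ and then integrates $L^2|t-s|^2$ over the square. You instead use the least-squares optimality of $\bar H_k$ from Lemma~\ref{lem:proj-opt} to compare against the midpoint value $H_k(\tau_m)$, $\tau_m=(\tau_k+\tau_{k+1})/2$, and integrate $L^2(\tau-\tau_m)^2$. The comparison is legitimate because $H_k(\tau_m;\vct x_k,\tilde{\vct u}_k)$ is $\scriptF_{\tau_k}$-measurable.

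Your route is shorter and reuses a lemma the paper already has. The paper's route needs no choice of center, and it is the sharper of the two under weaker time regularity: for a H\"older-$\tfrac12$ modulus it gives $L^2h^2/6$ against your $L^2h^2/4$. At Lipschitz regularity both give exactly $1/12$, with equality when $H_k$ is affine in $\tau$.

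A minor aside: the pointwise route you discard gives $7/60$ when carried out exactly, not $1/3$. It is still worse than $1/12$, so abandoning it was right.
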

\begin{proof}
    See Appendix~\ref{app:proof-thm1}.
\end{proof}

\begin{rem}
For small steps, the freezing error is $O(\Delta\tau_k^2)$ while the projection error is $O(\Delta\tau_k^3)$.
Hence, the freezing error dominates the diffusion discretization error in the present scheme.
Compared to existing methods~\cite{Ridderhof2019iCS,Kumagai2025Cislunar} that freeze the diffusion at the current reference \textit{before} linearization, which incurs an error $O(\Delta\tau)$, the proposed method which performs exact first-order linearization but approximate discretization has an error an order-of-magnitude smaller.
\end{rem}

\subsection{Moment Propagation}\label{subsec:moment-prop}
To encode the mean and covariance constraints \eqref{eq:term-cond} as well as reformulate the chance constraints \eqref{eq:ct-cc}, it is necessary to propagate the mean and covariance of the state.
We first expand the discretized dynamics \eqref{eq:approx-discrete-general} into a more familiar form to facilitate the analysis.
To this end, note the following equivalence
\begin{align}
    \vct x_{k+1}
    &= A_k \vct x_k + \vct{\nu}_k + \sum_{i=1}^{d} \big(\tilde{A}_k^{(i)} \vct x_k + \tilde{\vct{\nu}}_k^{(i)}\big) \Delta\vct w_k^{(i)} \nonumber\\
    &= \vct{\mathcal A}_k \vct x_k + \vct{\mathcal F}_k \tilde{\vct u}_k + \vct{\mathcal D}_k,\label{eq:lin-system}
\end{align}
where 
$
\vct{\mathcal A}_k \coloneqq A_k + \sum_{i=1}^{d} \tilde{A}_k^{(i)} \Delta\vct w_k^{(i)}, \,
\vct{\mathcal F}_k \coloneqq F_k + \sum_{i=1}^{d} \tilde{F}_k^{(i)} \Delta\vct w_k^{(i)}
$, and
$
\vct{\mathcal D}_k \coloneqq d_k + \sum_{i=1}^{d} \tilde{d}_k^{(i)} \Delta\vct  w_k^{(i)}.
$
Structurally, the resultant dynamical system \eqref{eq:lin-system} is that of a linear system perturbed by (correlated) multiplicative and additive noise.
Next, and similar to the derivation in \cite{Knaup2023ParametricCS}, we compute the first two moments of the uncertain affine system \eqref{eq:lin-system}.

\begin{thm}[One-step moment propagation]\label{thm:moment-prop}
Let $\bar{x}_k \coloneqq \E[\vct x_k]$ denote the mean state and let
\begin{align*}
\Sigma_{x_k} &\coloneqq \E\!\left[(\vct x_k - \bar{x}_k)(\vct x_k - \bar{x}_k)^\intercal\right],\\
\Sigma_{x_k \tilde{u}_k} &\coloneqq \E\!\left[(\vct x_k - \bar{x}_k)(\tilde{\vct u}_k - \E[\tilde{\vct u}_k])^\intercal\right],\\
\Sigma_{\tilde{u}_k} &\coloneqq \E\!\left[(\tilde{\vct u}_k - \E[\tilde{\vct u}_k])(\tilde{\vct u}_k - \E[\tilde{\vct u}_k])^\intercal\right]
\end{align*}
denote the state covariance, state--control cross covariance, and control covariance, respectively.
Then the one-step mean and covariance propagation associated with \eqref{eq:lin-system} is
\begin{subequations}\label{eq:moment-prop}
\begin{align}
    \bar{x}_{k+1} &= A_k \bar{x}_k + F_k \E[\tilde{\vct u}_k] + d_k, \label{eq:mean-prop}\\
    \Sigma_{x_{k+1}}
    \hspace{-3pt}&= A_k \Sigma_{x_k} A_k^\intercal
    \hspace{-2pt}+\hspace{-2pt} A_k \Sigma_{x_k \tilde{u}_k} F_k^\intercal
    \hspace{-2pt}+\hspace{-2pt} F_k \Sigma_{x_k \tilde{u}_k}^\intercal A_k^\intercal
    \hspace{-2pt}+\hspace{-2pt} F_k \Sigma_{\tilde{u}_k} F_k^\intercal \nonumber\\
    &\hspace{-0.5cm}
    + \Delta\tau_k \sum_{i=1}^{d}\Big(
    \tilde A_k^{(i)} \Sigma_{x_k} (\tilde A_k^{(i)})^\intercal
    + \tilde A_k^{(i)} \Sigma_{x_k \tilde{u}_k} (\tilde F_k^{(i)})^\intercal \nonumber\\
    &\hspace{-0.5cm}
    + \tilde F_k^{(i)} \Sigma_{x_k \tilde{u}_k}^\intercal (\tilde A_k^{(i)})^\intercal 
    + \tilde F_k^{(i)} \Sigma_{\tilde{u}_k} (\tilde F_k^{(i)})^\intercal
    + q_k^{(i)}(q_k^{(i)})^\intercal
    \Big), \label{eq:cov-prop}
\end{align}
\end{subequations}
where $q_k^{(i)} \coloneqq \tilde{A}_k^{(i)}\bar{x}_k + \tilde{F}_k^{(i)}\E[\tilde{\vct u}_k] + \tilde{d}_k^{(i)}$.
\end{thm}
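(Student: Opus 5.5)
The plan is to compute the first two moments of \eqref{eq:lin-system} directly. The one structural fact used throughout is that $\Delta\vct w_k$ is independent of $\scriptF_{\tau_k}$, as established after \eqref{eq:dw-moments}. I will assume, as for admissible affine policies, that $\tilde{\vct u}_k$ is $\scriptF_{\tau_k}$-measurable, being an affine function of $\vct x_k$. Then the pair $(\vct x_k,\tilde{\vct u}_k)$ is independent of $\Delta\vct w_k$. Write the update as
\begin{equation*}
\vct x_{k+1} = \vct m_k + \sum_{i=1}^{d} \vct h_k^{(i)}\,\Delta\vct w_k^{(i)},
\end{equation*}
where
\begin{equation*}
\vct m_k \coloneqq A_k\vct x_k + F_k\tilde{\vct u}_k + d_k, \qquad \vct h_k^{(i)} \coloneqq \tilde A_k^{(i)}\vct x_k + \tilde F_k^{(i)}\tilde{\vct u}_k + \tilde d_k^{(i)}.
\end{equation*}
Both $\vct m_k$ and $\vct h_k^{(i)}$ are $\scriptF_{\tau_k}$-measurable.

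\emph{Mean.} Take expectations and condition on $\scriptF_{\tau_k}$. Since $\E_k[\Delta\vct w_k^{(i)}]=0$ by \eqref{eq:dw-moments}, every noise term has zero mean, so $\E[\vct h_k^{(i)}\Delta\vct w_k^{(i)}]=\E[\vct h_k^{(i)}\,\E_k\Delta\vct w_k^{(i)}]=0$. What remains is $\bar x_{k+1}=\E[\vct m_k]$, which is \eqref{eq:mean-prop} by linearity.

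\emph{Covariance.} The centered next state is
\begin{equation*}
\vct x_{k+1}-\bar x_{k+1} = (\vct m_k-\E\vct m_k) + \sum_i \vct h_k^{(i)}\Delta\vct w_k^{(i)}.
\end{equation*}
Expand its outer product and take expectations term by term, using the tower property over $\scriptF_{\tau_k}$.
\begin{enumerate}
\item The cross terms between $\vct m_k-\E\vct m_k$ and $\vct h_k^{(j)}\Delta\vct w_k^{(j)}$ vanish, again because $\E_k[\Delta\vct w_k^{(j)}]=0$.
\item The deterministic block $\E[(\vct m_k-\E\vct m_k)(\cdot)^\intercal]$ gives the first line of \eqref{eq:cov-prop}. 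To see this, write $\vct m_k-\E\vct m_k = [A_k\ F_k]\,[\vct x_k-\bar x_k;\ \tilde{\vct u}_k-\E\tilde{\vct u}_k]$ and read off the joint covariance blocks $\Sigma_{x_k},\Sigma_{x_k\tilde u_k},\Sigma_{\tilde u_k}$.
\item In the noise block, $\E_k[\Delta\vct w_k^{(i)}\Delta\vct w_k^{(j)}]=\Delta\tau_k\delta_{ij}$, so it reduces to $\Delta\tau_k\sum_i\E[\vct h_k^{(i)}(\vct h_k^{(i)})^\intercal]$.
\end{enumerate}

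The last step is to split each second moment into covariance plus squared mean:
\begin{equation*}
\E[\vct h_k^{(i)}(\vct h_k^{(i)})^\intercal] = \mathrm{Cov}(\vct h_k^{(i)}) + \E\vct h_k^{(i)}(\E\vct h_k^{(i)})^\intercal.
\end{equation*}
Here $\E\vct h_k^{(i)}=q_k^{(i)}$. The covariance part is computed exactly as in step 2, using the stacked matrix $[\tilde A_k^{(i)}\ \tilde F_k^{(i)}]$ in place of $[A_k\ F_k]$. This produces the four $\tilde A,\tilde F$ terms in \eqref{eq:cov-prop}, together with the rank-one term $q_k^{(i)}(q_k^{(i)})^\intercal$.

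I expect no real obstacle. The one point needing care is step 3, where the diffusion coefficient is random and so cannot be pulled out of the expectation. The independence of $\Delta\vct w_k$ from $\scriptF_{\tau_k}$, combined with the $\scriptF_{\tau_k}$-measurability of $\tilde{\vct u}_k$, is what makes the factorization $\E[\vct h\vct h^\intercal\Delta w^{(i)}\Delta w^{(j)}]=\E[\vct h\vct h^\intercal]\Delta\tau_k\delta_{ij}$ valid. This is precisely why the ``second moment, not covariance'' of $\vct h_k^{(i)}$ appears, and hence the $q_kq_k^\intercal$ term.
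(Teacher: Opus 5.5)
Your proposal is correct and follows essentially the paper's route: the paper also gets the mean from $\E[\Delta\vct w_k^{(i)}]=0$. For the covariance, it writes the deviation dynamics with $q_k^{(i)}\Delta\vct w_k^{(i)}$ split off from $(\tilde A_k^{(i)}\delta\vct x_k+\tilde F_k^{(i)}\delta\tilde{\vct u}_k)\Delta\vct w_k^{(i)}$, then removes odd-power and mixed-channel terms using independence of $\Delta\vct w_k$ from $\scriptF_{\tau_k}$. Your split $\E[\vct h_k^{(i)}(\vct h_k^{(i)})^\intercal]=\mathrm{Cov}(\vct h_k^{(i)})+q_k^{(i)}(q_k^{(i)})^\intercal$ is the same decomposition done after rather than before taking expectations, and your explicit assumption that $\tilde{\vct u}_k$ is $\scriptF_{\tau_k}$-measurable states precisely what the paper leaves implicit.
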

\begin{proof}
See Appendix~\ref{app:proof-moment-prop}.
\end{proof}

\subsection{Policy parameterization}\label{subsec:policy}
We employ affine state-deviation feedback of the form
\begin{equation}\label{eq:affine-feedback}
    \tilde{\vct u}_k = \tilde v_k + \tilde K_k(\vct x_k-\bar{x}_k),
\end{equation}
where $\tilde v_k\coloneqq [v_k; \sigma_k]\in\R^{m+1}$ and $\tilde K_k\coloneqq [K_k; 0_{1\times n}]\in\R^{(m+1)\times n}$.
Under the parameterization \eqref{eq:affine-feedback}, the statistics of the control input satisfy $\E[\tilde{\vct u}_k]=\tilde v_k$, $\Sigma_{\tilde{u}_k} = \tilde K_k \Sigma_{x_k} \tilde K_k^\intercal$, and $\Sigma_{x_k \tilde{u}_k} = \Sigma_{x_k}\tilde K_k^\intercal$.
Substituting into \eqref{eq:moment-prop} yields
\begin{subequations}~\label{eq:moment-prop-affine}
\begin{align}
    \bar{x}_{k+1} 
    &= A_k \bar{x}_k + F_k \tilde{v}_k + d_k, \label{eq:mean-prop-affine-control}\\
    \Sigma_{x_{k+1}}
    &= (A_k + F_k \tilde K_k)\,\Sigma_{x_k}\,(A_k + F_k \tilde K_k)^\intercal + \Delta\tau_k\cdot \nonumber\\
    &\hspace{-0.9cm}\sum_{i=1}^{d}\Big(
        (\tilde A_k^{(i)} + \tilde F_k^{(i)}\tilde K_k)\,\Sigma_{x_k}\,(\tilde A_k^{(i)} + \tilde F_k^{(i)}\tilde K_k)^\intercal + q_k^{(i)}(q_k^{(i)})^\intercal
    \Big). \label{eq:cov-prop-affine-control}
\end{align}
\end{subequations}
From the resulting moment propagation in \eqref{eq:moment-prop-affine}, it is apparent that the mean dynamics \eqref{eq:mean-prop-affine-control} are linear in the decision variables $\{\bar{x}_k, v_k, \sigma_k\}$.
Equation \eqref{eq:cov-prop-affine-control}, on the other hand, makes explicit the nonlinear coupling between $\Sigma_{x_k}$ and $\tilde K_k$.
This issue will be resolved in Section~\ref{subsec:convex-reduction} through a suitable lossless relaxation.

\subsection{Chance Constraints}~\label{subsec:chance-constraints}
Because the exact diffusion linearization induces multiplicative noise, the discretized local model \eqref{eq:approx-discrete-general} is generally non-Gaussian even for Gaussian initial conditions. We therefore enforce chance constraints through a sufficient distributionally robust surrogate based only on the propagated first two moments \eqref{eq:moment-prop}, namely the \textit{distributionally robust} chance constraints (DR-CC)
\begin{subequations}~\label{eq:dr-ct-cc}
    \begin{align}
        \inf_{\P_{x_\tau} \in \scriptC(\mu_\tau, \Sigma_{x_\tau})} \P_{x_\tau}(\vct x_t \in \scriptX, \ \forall \tau \in [0,1]) \geq 1 - \Delta_x, \\ 
        \inf_{\P_{u_\tau} \in \scriptC(v_\tau, \Sigma_{u_\tau})} \P_{u_\tau}(\vct u_\tau \in \scriptU, \ \forall \tau \in [0,1]) \geq 1 - \Delta_u,
    \end{align}
\end{subequations}
where $\scriptC(\mu,\Sigma)$ denotes the Chebyshev ambiguity set of all distributions whose first two moments are equal to $(\mu, \Sigma)$.

We enforce \eqref{eq:dr-ct-cc} at the grid nodes $\tau_k$ (rather than continuously in time); continuous-time DR-CC enforcement via isoperimetric reformulations is possible~\cite{Elango2025ctSCvx} but omitted here. Using the polytopic sets in \eqref{eq:half-spaces}, we impose
\begin{subequations}~\label{eq:joint-dt-dr-cc}
\begin{align}
    &\inf_{\P_{x_k} \in \scriptC(\mu_k, \Sigma_{x_k})} \P_{x_k}(\alpha_j^\intercal \vct x_k + \beta_j \leq 0, \ \forall j,k) \geq 1 - \Delta_x, \\ 
    &\inf_{\P_{u_k} \in \scriptC(v_k, \Sigma_{u_k})} \P_{u_k}(a_j^\intercal \vct u_k + b_j \leq 0, \ \forall j,k) \geq 1 - \Delta_u.
\end{align}
\end{subequations}
The \textit{joint} DR-CC in \eqref{eq:joint-dt-dr-cc} are non-convex and intractable, in general~\cite{Lew2020ccSCP}.
We employ a standard relaxation using Boole's inequality~\cite{Prekopa1988Boole} to reformulate the joint DR-CC as the \textit{individual} DR-CC
\begin{subequations}~\label{eq:individual-dt-dr-cc}
    \begin{align}
        &\inf_{\P_{x_k} \in \scriptC(\mu_k, \Sigma_{x_k})} \P_{x_k}(\alpha_j^\intercal \vct x_k + \beta_j \leq 0) \geq 1 - \delta_{j,k}^{x}, \\ 
        &\inf_{\P_{u_k} \in \scriptC(v_k, \Sigma_{u_k})} \P_{u_k}(a_j^\intercal \vct u_k + b_j \leq 0) \geq 1 - \delta_{j,k}^{u}, \\ 
        &\sum_{j=1}^{N_x}\sum_{k=1}^{N} \delta_{j,k}^{x} \leq \Delta_{x},\quad \sum_{j=1}^{N_u}\sum_{k=0}^{N-1} \delta_{j,k}^{u} \leq \Delta_{u},
    \end{align}
\end{subequations}
where $\delta_{j,k}^{x}, \delta_{j,k}^{u} \in (0,0.5]$ represent the state and input risk allocations, respectively.
In general, these variables are unknown and must be optimized jointly with that of the state and control variables at run-time.
However, in this work, we employ the standard approximation of fixing the risk variables to the constant allocation $\delta_{j,k}^{x} = \Delta_x / (NN_x)$ and $\delta_{j,k}^{u} = \Delta_u / (NN_u)$.
We remark, however, that it is possible to jointly optimize the risk allocation with the controller using a two-step approach~\cite{Ono2008IRA,Pilipovsky2021IRA}.
Finally, we use results from the distributionally robust optimization literature~\cite{CalafioreElGhaoui2006DRCCLP} to evaluate the minimization problems in \eqref{eq:individual-dt-dr-cc} in closed form, yielding
\begin{subequations}~\label{eq:dr-cc-nonconvex}
    \begin{align}
        &\alpha_j^\intercal \mu_k + \beta_j + \scriptQ(\delta_{j,k}^{x}) \sqrt{\alpha_j^\intercal \Sigma_{x_k} \alpha_j} \leq 0, \\ 
        &a_j^\intercal v_k + b_j + \scriptQ(\delta_{j,k}^{u}) \sqrt{a_j^\intercal \Sigma_{u_k} a_j} \leq 0,
    \end{align}
\end{subequations}
where $\scriptQ(\delta) \triangleq \sqrt{\delta^{-1}(1-\delta)}$.

\subsection{Convex reduction}~\label{subsec:convex-reduction}
We now reformulate the non-convex constraints in the covariance propagation and distributionally robust (DR) chance constraints into a tractable convex form suitable for SCP subproblems. Specifically, the one-step covariance recursion \eqref{eq:cov-prop-affine-control} contains (i) nonlinear terms coupling the covariance and feedback variables $(\Sigma_{x_k},\tilde K_k)$, and (ii) quadratic outer-product terms coupling the mean and feedforward variables $(\bar x_k,\tilde v_k)$. In addition, the DR chance constraints \eqref{eq:dr-cc-nonconvex} are non-convex due to the square-root dependence on the state and input covariances. We address the covariance-recursion non-convexities via lifting and Schur-complement relaxations, and then convexify the chance constraints through auxiliary variables and first-order linearization. The resulting constraints are conic/semidefinite representable and therefore compatible with the SCP subproblems.

\subsubsection*{Nonlinearity in $(\Sigma_{x_k},\tilde K_k)$}
In a similar vein to \cite{Liu2025dtCS}, we introduce the change of variables
\begin{equation}\label{eq:U-def}
    U_k \coloneqq \tilde K_k \Sigma_{x_k} \in \R^{(m+1)\times n}.
\end{equation}
Whenever $\Sigma_{x_k}\succ 0$, we have $\tilde K_k = U_k \Sigma_{x_k}^{-1}$ and hence
\begin{equation}\label{eq:KSK}
    \tilde K_k \Sigma_{x_k}\tilde K_k^\intercal = U_k \Sigma_{x_k}^{-1} U_k^\intercal,
\end{equation}
which is jointly nonconvex in $(U_k,\Sigma_{x_k})$.
We convexify via the LMI relaxation
\begin{equation}\label{eq:Y-relax}
    Y_k \succeq U_k \Sigma_{x_k}^{-1} U_k^\intercal.
\end{equation}
Using the Schur complement (and the fact that $\Sigma_{x_k}\succ 0$), \eqref{eq:Y-relax} is equivalent to
\begin{equation}\label{eq:Y-schur}
    \begin{bmatrix}
        Y_k & U_k \\
        U_k^\intercal & \Sigma_{x_k}
    \end{bmatrix} \succeq 0.
\end{equation}

\subsubsection*{Bilinearity in $(\bar x_k,\tilde v_k)$}
For each $(i,k)$, define the auxiliary decision variable
\begin{equation}\label{eq:Sigma-tilde-relax}
    \tilde\Sigma_{ik} \succeq \tilde q_k^{(i)}(\tilde q_k^{(i)})^\intercal,
    \qquad
    \tilde q_k^{(i)} = \tilde{A}_k^{(i)} \bar{x}_k + \tilde{F}_k^{(i)} \tilde{v}_k + \tilde{d}_k^{(i)}.
\end{equation}
Again by the Schur complement, \eqref{eq:Sigma-tilde-relax} is equivalent to the LMI
\begin{equation}\label{eq:Sigma-tilde-schur}
    \begin{bmatrix}
        \tilde\Sigma_{ik} & \tilde q_k^{(i)} \\
        (\tilde q_k^{(i)})^\intercal & 1
    \end{bmatrix}\succeq 0.
\end{equation}

Substituting \eqref{eq:U-def}--\eqref{eq:Sigma-tilde-schur} into the covariance propagation \eqref{eq:cov-prop-affine-control} yields the following convex constraint system
\begin{subequations}\label{eq:cov-final-relaxed}
\begin{align}
    &\Sigma_{x_{k+1}}  = A_k \Sigma_{x_k} A_k^\intercal + A_k U_k^\intercal F_k^\intercal + F_k U_k A_k^\intercal + F_k Y_k F_k^\intercal \nonumber\\
    &\quad + \Delta\tau_k \sum_{i=1}^{d}\Big(
        \tilde{A}_k^{(i)} \Sigma_{x_k} (\tilde{A}_k^{(i)})^\intercal
        + \tilde{A}_k^{(i)} U_k^\intercal (\tilde{F}_k^{(i)})^\intercal \nonumber\\
        &\hspace{1cm}
        + \tilde{F}_k^{(i)} U_k (\tilde{A}_k^{(i)})^\intercal + \tilde{F}_k^{(i)} Y_k (\tilde{F}_k^{(i)})^\intercal + \tilde{\Sigma}_{ik}
    \Big), \label{eq:cov-final-relaxed-1}\\
    &\begin{bmatrix}
        Y_k & U_k \\
        U_k^\intercal & \Sigma_{x_k}
    \end{bmatrix} \succeq 0, \label{eq:cov-final-relaxed-2}\\
    &\begin{bmatrix}
        \tilde\Sigma_{ik} & \tilde{A}_k^{(i)} \bar{x}_k + \tilde{F}_k^{(i)} \tilde{v}_k + \tilde{d}_k^{(i)} \\
        \star & 1
    \end{bmatrix} \succeq 0, \quad \forall i\in\{1,\dots,d\},\label{eq:cov-final-relaxed-3}
\end{align}
\end{subequations}
for all time steps $k = 0,\ldots, N-1$.
The constraints in \eqref{eq:cov-final-relaxed} are affine in the decision variables $\{\bar x_k,\Sigma_{x_k},\tilde v_k,U_k,Y_k,\tilde\Sigma_{ik}\}$.
It is natural to ask whether the relaxations \eqref{eq:Y-relax} and \eqref{eq:Sigma-tilde-relax} introduce conservatism in the covariance propagation and, consequently, in the terminal covariance constraints \eqref{eq:term-cond}. In general, \eqref{eq:cov-final-relaxed-1} over-approximates the one-step covariance. Let $P_k$ denote the true state covariance under the linearized dynamics \eqref{eq:cov-prop-affine-control}. Then $\Sigma_{x_k}\succeq P_k$ for all $k=0,\ldots,N$ (with equality at $k=0$), so enforcing $\Sigma_{x_N}=\Sigma_f$ is sufficient to guarantee $P_N\preceq \Sigma_f$. Moreover, with a suitable regularization of the objective \eqref{eq:obj-riemann}, the relaxations become lossless at optimality; i.e., the relaxed and original covariance-steering problems share the same optimal decision variables and objective value (see Theorem~\ref{thm:cov-losslessness}, Appendix~\ref{app:lossless}).

Next, we convexify the DR chance constraints \eqref{eq:dr-cc-nonconvex}. We first relax the input-covariance terms by replacing $\Sigma_{u_k}$ with $Y_k$. Since $Y_k \succeq \Sigma_{u_k}$, this yields sufficient (generally conservative) input constraints. We then introduce auxiliary variables $\kappa_{x,j,k}, \kappa_{u,j,k}\ge 0$ satisfying
\[
\kappa_{x,j,k}^2 \ge \alpha_j^\intercal \Sigma_{x_k}\alpha_j,
\qquad
\kappa_{u,j,k}^2 \ge a_j^\intercal Y_k a_j.
\]
Under this lifting, the DR chance constraints become linear in $(\mu_k,v_k,\kappa_{x,j,k},\kappa_{u,j,k})$, while the remaining non-convexity is isolated in the quadratic inequalities defining $\kappa_{x,j,k}$ and $\kappa_{u,j,k}$. Given reference values $(\hat\kappa_{x,j,k},\hat\kappa_{u,j,k})$ (initialized as discussed in Section~\ref{sec:scp}), we linearize these inequalities to obtain
\begin{subequations}~\label{eq:dr-cc-lin}
    \begin{align}
        \alpha_j^\intercal \bar{x}_k + \beta_j + \scriptQ(\delta_{j,k}^{x}) \kappa_{x,j,k} &\leq 0, \label{eq:dr-cc-lin1}\\
        \alpha_j^\intercal \Sigma_{x_k} \alpha_j - \hat\kappa_{x,j,k}^2 - 2\hat\kappa_{x,j,k}(\kappa_{x,j,k} - \hat\kappa_{x,j,k}) &\leq 0, \label{eq:dr-cc-lin2}\\
        a_j^\intercal v_k + b_j + \scriptQ(\delta_{j,k}^{u}) \kappa_{u,j,k} &\leq 0, \label{eq:dr-cc-lin3}\\
        a_j^\intercal Y_k a_j - \hat\kappa_{u,j,k}^2 - 2\hat\kappa_{u,j,k}(\kappa_{u,j,k} - \hat\kappa_{u,j,k}) &\leq 0. \label{eq:dr-cc-lin4}
    \end{align}
\end{subequations}

Finally, for completeness we note that the initial and terminal distributional constraints on the state are convex by design, since we assume the moments are decision variables in the resulting program.
As a result, the boundary moment constraints are simply written as
\begin{subequations}~\label{eq:boundary-cond-final}
    \begin{align}
        \bar{x}_0 &= \mu_i, \hspace{-1cm}&&\Sigma_{x_0} = \Sigma_i, \\ 
        \bar{x}_N &= \mu_f, \hspace{-1cm}&&\Sigma_{x_N} = \Sigma_f.
    \end{align}
\end{subequations}

%%%%%%%%%%%%%%%%%%%%%%%%%%%%%%%%%%%%%%%%%%%%%%%%%%%%%%%%%%%%%%%%%%%%%%%%%%%%%%%%
\section{SEQUENTIAL CONVEX PROGRAMMING}~\label{sec:scp}
In this section, we present the full FFT-iCS SCP procedure used to compute a local solution of Problem~\ref{prob:FFT-CS}. At iteration $\ell$, the nonlinear time-scaled SDE is linearized and discretized about a current reference, the resulting convex penalty covariance-steering subproblem is solved, and the candidate step is accepted or rejected according to the standard SCvx$^\star$ ratio test. In the following, we state the initialization, diagnostics, and update formulas explicitly.

The first step is to construct an initial reference trajectory and time-dilation profile $(\hat x^{(0)},\hat u^{(0)},\hat\sigma^{(0)})$ for the system linearization. This initialization is problem dependent; in practice, any boundary-consistent deterministic or mean-only steering solution, or any near-feasible interpolating trajectory with admissible control and mesh variables, may be used. Given $(\hat x^{(0)},\hat u^{(0)},\hat\sigma^{(0)})$, the remaining chance-constraint linearization variables are initialized from a preliminary \emph{unconstrained} FFT-CS solve\footnote{This can be interpreted as a zeroth (warm-start) iteration. In practice, the corresponding optimal decision variables may also be passed as the initial guess for the first chance-constrained solve, which often improves solver convergence.}, exactly as in \cite{Kumagai2025Cislunar}. From the resulting covariances $\{\Sigma_{x_k}^{\star},Y_k^{\star}\}$, we set
\begin{equation*}
    \hat\kappa_{x,j,k}^{(0)} = \sqrt{\alpha_j^\intercal \Sigma_{x_k}^{\star} \alpha_j},
    \qquad
    \hat\kappa_{u,j,k}^{(0)} = \sqrt{a_j^\intercal Y_k^{\star} a_j}.
\end{equation*}
Together with the remaining primal variables produced by the warm start, this yields the full reference tuple $\hat z^{(0)}$ for Problem~\ref{prob:penalty-CS-problem}. We denote by
\begin{equation*}
    \hat y^{(\ell)} \coloneqq \big(\hat x^{(\ell)},\hat u^{(\ell)},\hat\sigma^{(\ell)},\hat\kappa^{(\ell)}\big)
\end{equation*}
the subset of $\hat z^{(\ell)}$ required for linearization and discretization. The associated local model data are summarized by
\begin{equation*}
    \mathcal Z^{(\ell)} = \mathrm{dis}_{\scriptP} \circ \mathrm{lin}_{\scriptD}(\hat y^{(\ell)}),
\end{equation*}
where $\mathcal D$ denotes the time-scaled nonlinear system \eqref{eq:nl-sde-scaled} and $\scriptP$ the partition in \eqref{eq:approx-discrete-general}.

There are two well-known pathologies of SCP subproblems: (i) artificial \emph{infeasibility} and (ii) artificial \emph{unboundedness}. To address the former, we introduce virtual controls / buffer variables in the linearized constraints and penalize them in the objective. To address the latter, we impose trust-region constraints around the current linearization point.
Specifically, we soften the linearized mean dynamics and the linearized chance-constraint quadratic surrogates via
\begin{subequations}
\begin{align}
    \bar{x}_{k+1} - A_k \bar{x}_k - F_k \tilde{v}_k - d_k &= \xi_k, \label{eq:linearized-mean-eq-buffer}\\
    \alpha_j^\intercal\Sigma_{x_k} \alpha_j - 2\hat\kappa_{x,j,k}\kappa_{x,j,k} + \hat\kappa_{x,j,k}^2 &\leq \zeta_{x,j,k}, \label{eq:linearized-chance-ineq-buffer1} \\
    a_j^\intercal Y_k a_j - 2\hat\kappa_{u,j,k}\kappa_{u,j,k} + \hat\kappa_{u,j,k}^2 &\leq \zeta_{u,j,k}, \label{eq:linearized-chance-ineq-buffer2}
\end{align}
\end{subequations}
where $\xi_k\in\R^n$ and $\zeta_{x,j,k},\zeta_{u,j,k}\ge 0$ are penalized using the augmented-Lagrangian penalty \cite{Oguri2023ALSCvx}
\begin{equation}
    \scriptJ_{\mathrm{pen}}(\xi,\zeta\mid w,\lambda,\mu)
    = \mu^\intercal \xi + \frac{w}{2}\|\xi\|_2^2 + \lambda^\intercal \zeta + \frac{w}{2}\|[\zeta]_+\|_2^2,
\end{equation}
with multipliers $\mu\in\R^{Nn}$ and $\lambda\in\R^{(N_x+N_u)N}$ and penalty weight $w>0$.
Artificial unboundedness is mitigated by the trust region
\begin{equation}~\label{eq:trust-region}
    \left\|
    \begin{bmatrix}
        W_{x}           && \\
        & W_{\tilde{u}} &  \\
        && W_{\kappa}
    \end{bmatrix}
    \begin{bmatrix}
        \bar{x} - \hat{x} \\
        \tilde{v} - [\hat{u};\hat{\sigma}] \\
        \kappa - \hat\kappa
    \end{bmatrix}
    \right\|_{\infty} \leq \Delta_{\mathrm{tr}},
\end{equation}
where $W_x, W_{\tilde u}, W_\kappa \succ 0$ are scaling matrices and $\Delta_{\mathrm{tr}}>0$ is the trust-region radius.
Putting these ingredients together, the convex penalty covariance-steering subproblem solved at each SCP iteration is the following.
\begin{problem}[P-CS]~\label{prob:penalty-CS-problem}
    Let the primal decision variables be
    \begin{align*}
        z \coloneqq \Big(&\{\bar x_k\}_{k=0}^{N},\,\{\Sigma_{x_k}\}_{k=0}^{N},\,\{\tilde v_k\}_{k=0}^{N-1},\,\{U_k,Y_k\}_{k=0}^{N-1}, \\
        &\{\tilde\Sigma_{ik}\}_{i=1,k=0}^{d,\;N-1},\,\{\kappa_{x,j,k}\}_{j=1,k=1}^{N_x,\;N},\,\{\kappa_{u,j,k}\}_{j=1,k=0}^{N_u,\;N-1}\Big),
    \end{align*}
    and define the penalty variables
    \begin{equation*}
        \xi := \{\xi_k\}_{k=0}^{N-1}, \quad
        \zeta := \big(\{\zeta_{x,j,k}\}_{j=1,k=1}^{N_x,\,N},\,\{\zeta_{u,j,k}\}_{j=1,k=0}^{N_u,\,N-1}\big).
    \end{equation*}
    Define $\mathbb{L} := \R^{n} \times \S_{++}^{n} \times \R^{m} \times \R_{+} \times \R^{\tilde{m}\times n} \times \S_{+}^{\tilde{m}} \times \S_{+}^{n} \times \R_{+}^{N_x} \times \R_{+}^{N_u}$ as the per-stage feasible cone, where $\tilde{m} = m+1$.
    The penalty covariance-steering problem is

    \noindent\hspace*{-2.1em}%
    \begin{minipage}{\linewidth}
    \begin{align*}
        \min_{z,\xi,\zeta}\quad
        & \eta \sum_{k=0}^{N-1}\Delta\tau_k\sigma_k + \scriptJ_{\mathrm{reg}}(z) + \scriptJ_{\mathrm{pen}}(\xi,\zeta) \\
        \text{s.t.}\quad\,
        & \eqref{eq:linearized-mean-eq-buffer} &&\hspace{-5cm}\gets\textrm{soft mean constraints} \\
        & \eqref{eq:cov-final-relaxed}         &&\hspace{-5cm}\gets\textrm{covariance constraints} \\
        & \eqref{eq:dr-cc-lin1},\, \eqref{eq:dr-cc-lin3},\, \eqref{eq:linearized-chance-ineq-buffer1},\, \eqref{eq:linearized-chance-ineq-buffer2}
        \ \rlap{$\gets\ \textrm{soft chance constraints}$} \\
        & \eqref{eq:trust-region}              &&\hspace{-5cm}\gets\textrm{trust region constraints} \\
        & \eqref{eq:boundary-cond-final}       &&\hspace{-5cm}\gets\textrm{boundary constraints} \\
        & z \in \mathbb{L},\; \xi \in \R^{Nn},\; \zeta \in \R_{+}^{(N_x + N_u)N}.
    \end{align*}
    \end{minipage}
\end{problem}

To define the SCvx$^\star$ diagnostics, let
\begin{subequations}\label{eq:scp-merit-fns}
\begin{align}
    &\scriptJ_{\mathrm{nl}}^{(\ell)}(z)
    \coloneqq \scriptJ(z) +  \scriptJ_{\mathrm{pen}}\big(g_{\mathrm{nl}}(z),h_{\mathrm{nl}}(z)\mid w^{(\ell)},\lambda^{(\ell)},\mu^{(\ell)}\big), \label{eq:scp-merit-fns-a}\\
    &\scriptL_{\mathrm{cvx}}^{(\ell)}(z,\xi,\zeta)
    \coloneqq \scriptJ(z) + \scriptJ_{\mathrm{pen}}\big(\xi,\zeta\mid w^{(\ell)},\lambda^{(\ell)},\mu^{(\ell)}\big), \label{eq:scp-merit-fns-b}
\end{align}
\end{subequations}
where, for all $k = 0,\ldots, N-1$, the nonlinear residual maps are given by
\begin{align*}
    g_{\mathrm{nl},k}(z)
    &\coloneqq \bar{x}_{k+1} - \left(\bar{x}_{k} + \int_{\tau_{k}}^{\tau_{k+1}} \sigma_{k} f(\bar{x}_{\tau},v_{k})\, \d\tau\right),\\
    h_{\mathrm{nl},x,j,k}(z)
    &\coloneqq \alpha_j^\intercal \Sigma_{x_{k+1}}\alpha_j - \kappa_{x,j,{k+1}}^2,
    &&\hspace{-2.5cm} j=1,\ldots,N_x,\\
    h_{\mathrm{nl},u,j,k}(z)
    &\coloneqq a_j^\intercal Y_k a_j - \kappa_{u,j,k}^{2},
    && \hspace{-2.5cm}j=1,\ldots,N_u,
\end{align*}
and $g_{\mathrm{nl}}(z)$ and $h_{\mathrm{nl}}(z)$ collect these equality and inequality residuals, respectively. Thus, after solving Problem~\ref{prob:penalty-CS-problem} at iteration $\ell$ and obtaining $(z^{\star},\xi^{\star},\zeta^{\star})$, we define the nonlinear cost improvement, linearized cost improvement, and nonlinear infeasibility by
\begin{subequations}\label{eq:scvx-diagnostics}
\begin{align}
    \Delta J^{(\ell)}
    &\coloneqq \scriptJ_{\mathrm{nl}}^{(\ell)}\big(\hat z^{(\ell)}\big) - \scriptJ_{\mathrm{nl}}^{(\ell)}\big(z^{\star}\big), \label{eq:scvx-diagnostics-a}\\
    \Delta L^{(\ell)}
    &\coloneqq \scriptJ_{\mathrm{nl}}^{(\ell)}\big(\hat z^{(\ell)}\big) - \scriptL_{\mathrm{cvx}}^{(\ell)}\big(z^{\star},\xi^{\star},\zeta^{\star}\big), \label{eq:scvx-diagnostics-b}\\
    \chi^{(\ell)}
    &\coloneqq \left\|\begin{bmatrix} g_{\mathrm{nl}}(z^{\star}) \\ [h_{\mathrm{nl}}(z^{\star})]_+ \end{bmatrix}\right\|_2. \label{eq:scvx-diagnostics-c}
\end{align}
\end{subequations}
A subtle but important point is that the definition \eqref{eq:scvx-diagnostics-b} uses the \emph{same} iteration-dependent augmented-Lagrangian quantities $\big(w^{(\ell)},\lambda^{(\ell)},\mu^{(\ell)}\big)$ in both terms. This mirrors the SCvx$^\star$ construction in \cite{Oguri2023ALSCvx} and preserves the key property $\Delta L^{(\ell)}\ge 0$; if one instead mixes different iterations' penalty parameters, then this monotonicity can be lost.

The step-acceptance ratio is then defined by
\begin{equation}\label{eq:scvx-ratio}
    \rho^{(\ell)} \coloneqq \Delta J^{(\ell)} / \Delta L^{(\ell)}
\end{equation}
and the candidate step is accepted whenever $\rho^{(\ell)} \ge \rho_0$. Convergence is declared when both optimality and feasibility have converged, namely,
\begin{equation}\label{eq:scvx-conv-criterion}
    |\Delta J^{(\ell)}| \le \epsilon_{\mathrm{opt}}
    \quad\cap\quad
    \chi^{(\ell)} \le \epsilon_{\mathrm{feas}}.
\end{equation}
When an accepted step also satisfies the asymptotic exactness condition
\begin{equation}\label{eq:scvx-delta-condition}
    |\Delta J^{(\ell)}| < \delta^{(\ell)},
\end{equation}
the augmented-Lagrangian quantities are updated using the \emph{nonlinear} residuals as
\begin{equation}\label{eq:scvx-al-update}
\begin{aligned}
    \mu^{(\ell+1)}
    &= \mu^{(\ell)} + w^{(\ell)} g_{\mathrm{nl}}(z^{\star}), \\
    \lambda^{(\ell+1)}
    &= \big[\lambda^{(\ell)} + w^{(\ell)} h_{\mathrm{nl}}(z^{\star})\big]_+, \\
    w^{(\ell+1)}
    &= \min\big\{\beta w^{(\ell)},\, w_{\max}\big\}, \\
    \delta^{(\ell+1)}
    &=
    \begin{cases}
        |\Delta J^{(\ell)}|, & \delta^{(\ell)} = \infty,\\
        \gamma \, \delta^{(\ell)}, & \text{otherwise},
    \end{cases} 
\end{aligned}
\end{equation}
where $\beta>1$ and $\gamma\in(0,1)$. If \eqref{eq:scvx-delta-condition} is not satisfied, then $\mu$, $\lambda$, $w$, and $\delta$ are left unchanged.

Finally, the trust-region radius is updated according to the standard SCvx rule
\begin{equation}\label{eq:scvx-tr-update}
    \Delta_{\mathrm{tr}}^{(\ell+1)} =
    \begin{cases}
        \max\big\{\Delta_{\mathrm{tr}}^{(\ell)}/\alpha_1,\, \Delta_{\min}\big\}, & \rho^{(\ell)} < \rho_1,\\
        \Delta_{\mathrm{tr}}^{(\ell)}, & \rho_1 \le \rho^{(\ell)} < \rho_2,\\
        \min\big\{\alpha_2\Delta_{\mathrm{tr}}^{(\ell)},\, \Delta_{\max}\big\}, & \rho^{(\ell)} \ge \rho_2,
    \end{cases}
\end{equation}
where $\rho_0 < \rho_1 < \rho_2$ and $\alpha_1,\alpha_2 > 1$.

The complete FFT-iCS SCvx$^\star$ procedure is summarized in Algorithm~\ref{alg:scvxstar}.
\algrenewcommand\algorithmicindent{1.0em}
\begin{algorithm}[!htb]
\caption{FFT-iCS \texttt{SCvx*}}
\label{alg:scvxstar}
\begin{algorithmic}[1]
\Statex \textbf{Input:} $\epsilon_{\mathrm{opt}},\epsilon_{\mathrm{feas}},\hat z^{(0)},\hat y^{(0)},\Delta_{\mathrm{tr}}^{(0)},\Delta_{\min},\Delta_{\max},w^{(0)},$
\Statex \hspace{\algorithmicindent} $w_{\max},\rho_0,\rho_1,\rho_2,\alpha_1,\alpha_2,\beta,\gamma$
\Statex \textbf{Output:} local solution $(v^\star, K^\star)$
\State $\ell \gets 0$, $\Delta J^{(0)} \gets \infty$, $\chi^{(0)} \gets \infty$, $\delta^{(0)} \gets \infty$, $\lambda^{(0)} \gets 0$, $\mu^{(0)} \gets 0$
\While{not converged \textbf{and} $\ell < \ell_{\max}$} \hspace{1.2cm}\Comment{Eq. \eqref{eq:scvx-conv-criterion}}
    \State $\mathcal Z^{(\ell)} \gets \mathrm{dis}_{\scriptP}\circ\mathrm{lin}_{\scriptD}(\hat y^{(\ell)})$ 
    \State $\{z^{\star},\xi^{\star},\zeta^{\star}\} \gets$ solve Problem~\ref{prob:penalty-CS-problem}
    \State Compute $\{\Delta J^{(\ell)},\Delta L^{(\ell)},\chi^{(\ell)}\}$ \hspace{2cm} \Comment{Eq. \eqref{eq:scvx-diagnostics}}
    \State Set $\hat z^{(\ell+1)} \gets \hat z^{(\ell)}$, $\lambda^{(\ell+1)} \gets \lambda^{(\ell)}$, $\mu^{(\ell+1)} \gets \mu^{(\ell)}$, $w^{(\ell+1)} \gets w^{(\ell)}$, $\delta^{(\ell+1)} \gets \delta^{(\ell)}$
    \If{$\Delta L^{(\ell)} = 0$}
        \State $\rho^{(\ell)} \gets 1$
    \Else
        \State $\rho^{(\ell)} \gets \Delta J^{(\ell)} / \Delta L^{(\ell)}$ 
    \EndIf
    \If{$\rho^{(\ell)} \ge \rho_0$}
        \State $\hat z^{(\ell+1)} \gets z^{\star}$ \hspace{3.5cm} \Comment{accept the step}
        \State extract $\hat y^{(\ell+1)}$ from $\hat z^{(\ell+1)}$
        \If{$|\Delta J^{(\ell)}| < \delta^{(\ell)}$}
            \State Update $\{\mu,\lambda,w,\delta\}^{(\ell+1)}$ \hspace{2.2cm} \Comment{Eq. \eqref{eq:scvx-al-update}}
        \EndIf
    \EndIf
    \State Update $\Delta_{\mathrm{tr}}^{(\ell+1)}$ \hspace{4.2cm} \Comment{Eq. \eqref{eq:scvx-tr-update}}
    \State $\ell \gets \ell + 1$
\EndWhile
\State \Return $\hat z^{(\ell)}$
\end{algorithmic}
\end{algorithm}

Algorithm~\ref{alg:scvxstar} makes explicit the three quantities that are central to the SCP logic: the actual nonlinear improvement $\Delta J^{(\ell)}$, the predicted convex-model improvement $\Delta L^{(\ell)}$, and the nonlinear infeasibility $\chi^{(\ell)}$. The ratio $\rho^{(\ell)}$ governs step acceptance and trust-region adaptation, while the condition \eqref{eq:scvx-delta-condition} determines when the augmented-Lagrangian multipliers are advanced. In this way, the algorithm balances progress in optimality and feasibility while preserving the robustness properties of SCvx$^\star$ established in \cite{Oguri2023ALSCvx}.

%%%%%%%%%%%%%%%%%%%%%%%%%%%%%%%%%%%%%%%%%%%%%%%%%%%%%%%%%%%%%%%%%%%%%%%%%%%%%%%%
\section{NUMERICAL EXAMPLE}~\label{sec:examples}
We consider a double integrator system affected by both drag forces and a velocity-dependent disturbance.
Let the state be $\vct x := [\vct r; \vct v] \in \R^{2}$ in terms of position and velocity, and control input be $\vct u := a \in \R^{}$ in terms of the acceleration.
The dynamics are assumed to be
\begin{equation}~\label{eq:sde-double-int}
    \d \!
    \begin{bmatrix}
        \vct r_t \\ 
        \vct v_t
    \end{bmatrix} = 
    \begin{bmatrix}
        \vct v_t \\ 
        \vct a_t - C_D \vct v_t \|\vct v_t\|_{2}
    \end{bmatrix}\, \d t + 
    \begin{bmatrix}
        0 \\ 
        g_0 + g_1 \|\vct v_t\|_{2}
    \end{bmatrix}\, \d \vct w_{t},
\end{equation}
over the interval $t \in [0, t_f]$, with the final time $t_f$ unknown.
The coefficient $C_D = 0.15$ represents the nonlinear drag coefficient, and $g_0, g_1 > 0$ denote the base diffusion and state-dependent diffusion gains, which we will be given depending on the context.
For the discretization, we choose $N = 30$ steps with a uniform time-scaled partition $\scriptP = (\tau_0,\ldots,\tau_N)$ such that $\Delta\tau_k\equiv\Delta\tau = 0.02$.
The SCvx* parameters are given in Table~\ref{tab:scvx-params}, where we use the same tolerance $\epsilon = \epsilon_{\text{feas}} = \epsilon_{\text{opt}}$ for both the objective decrease and constraint feasibility, and $\ell_{\max} = 100$ maximum iterations.
\begin{table}[t]
    \caption{SCvx* parameters}
    \label{tab:scvx-params}
    \centering
    \small
    \setlength{\tabcolsep}{3pt}
    \begin{tabular}{c c c c}
        \toprule
        $\epsilon$ & $\{\rho_0,\rho_1,\rho_2\}$ & $\{\alpha_1,\alpha_2,\beta,\gamma,w_{\max}\}$ & $\{\Delta^{(0)},\Delta_{\min},\Delta_{\max}\}$ \\
        \midrule
        $10^{-5}$ & $\{0,0.25,0.7\}$ & $\{2,3,2,0.9,10^6\}$ & $\{0.1,10^{-10},10\}$ \\
        \bottomrule
    \end{tabular}
\end{table}
For the regularized objective in Problem~\ref{prob:penalty-CS-problem}, we choose
\begin{equation}~\label{eq:baseline-obj}
\scriptJ_{\mathrm{reg}} = \sum_{k=0}^{N-1}\Delta\tau\big(\mathrm{tr}(Q \Sigma_{x_k}) + \mathrm{tr}(R Y_k)\big) + \sum_{i=1}^{d}\epsilon_{\tilde\Sigma}\,\mathrm{tr}(W_{\tilde\Sigma}\tilde\Sigma_{ik}),
\end{equation}
where $\epsilon_{\tilde\Sigma}=10^{-4}$ and $W_{\tilde\Sigma}=I$ to satisfy the lossless-relaxation conditions (see Appendix~\ref{app:lossless}). We choose $Q=\textrm{diag}(10,1)$ and $R=0.1$ as the state and control covariance weights, respectively.
The initial state is chosen with mean $\mu_i = [0,0]^\intercal$ and covariance $\Sigma_i = 0.15 I_2$, and the desired final state is chosen with mean $\mu_f = [1, 0]^\intercal$ and covariance $\Sigma_f = \Sigma_i$.
We enforce the control chance constraints $|u| \leq u_{\max} = 5$, with $\Delta_u = 0.1$, distributed uniformly across all time steps and sides.
The convex SDP in Problem~\ref{prob:penalty-CS-problem} is solved using the \texttt{cvxpy} optimization suite~\cite{diamond2016cvxpy} with the \texttt{mosek} solver~\cite{mosek}.
For numerical stability, we enforce bounds on the time dilation factor as $0.4 = \sigma_{\min} \leq \sigma_k \leq \sigma_{\max} = 1.6$ for all $k = 0,\ldots, N-1$.
For Monte Carlo validation, we use $N_{\mathrm{mc}}=1,000$ rollouts of the nonlinear SDE \eqref{eq:sde-double-int} simulated by Milstein's method with $n_{\mathrm{sub}}=10$ sub-steps per normalized interval; see Appendix~\ref{app:sde-numerical-int} for the numerical integration details.

\subsection{Additive Noise}~\label{subsec:add-noise-analysis}
In this first example, we set $g_0=0.2$ and $g_1=0$ (purely additive noise) to isolate the effect of free-final-time optimization.
We sweep $\eta\in[0,10]$ to assess both time minimization and SCP convergence. Table~\ref{tab:oft-eta-sweep} shows convergence within the iteration budget for all tested values (reported as $\ell_{\mathrm{conv}}$) and a monotonic decrease in the optimal final time as $\eta$ increases.
\begin{table}[!htb]
\caption{Selected Optimal Free Final Times}
\label{tab:oft-eta-sweep}
\centering
\begin{tabular}{c c c c c c c c}
    \toprule
    $\eta$ & 0 & 0.2 & 0.5 & 0.8 & 1 & 2 & 10 \\
    \midrule
    $t_f^\star$ & 1.60 & 1.43 & 1.35 & 1.28 & 1.22 & 1.08 & 0.99 \\
    \midrule
    $\ell_{\mathrm{conv}}$ & 12 & 11 & 28 & 44 & 62 & 64 & 52 \\ 
    \bottomrule
\end{tabular}
\end{table}
The value $\eta=0$ corresponds to the standard CS objective~\cite{Liu2025dtCS} with no final-time penalty, so the optimizer drives the time-dilation variables to the \emph{maximum} attainable final time\footnote{With $\eta=0$, the final time is unpenalized; increasing $\sigma_k$ increases the available physical time per normalized interval, which weakly enlarges the feasible set and typically reduces the required control effort (and associated penalties). Hence, under the box constraints $\sigma_{\min} \le \sigma_k\le\sigma_{\max}$, the optimizer saturates $\sigma_k^\star=\sigma_{\max}$ for all $k$.
}
$\sigma_{\max}$ as the baseline solution.
Further increasing $\eta$ causes the optimal time to decrease beyond this point, as expected, until it tapers out at approximately one, which represents the minimum value the final time can decrease such that the problem still remains feasible (i.e., as $\eta\rightarrow\infty$).

To illustrate the convergence of the FFT-iCS algorithm, we choose a representative value $\eta = 1$ and show the decay of the nonlinear cost improvement $|\Delta J|$ as well as the nonlinear constraint infeasibility $\chi$, in Figure~\ref{fig:scvx-conv}.
\begin{figure}[!htb]
    \centering
    \includegraphics[width=\linewidth]{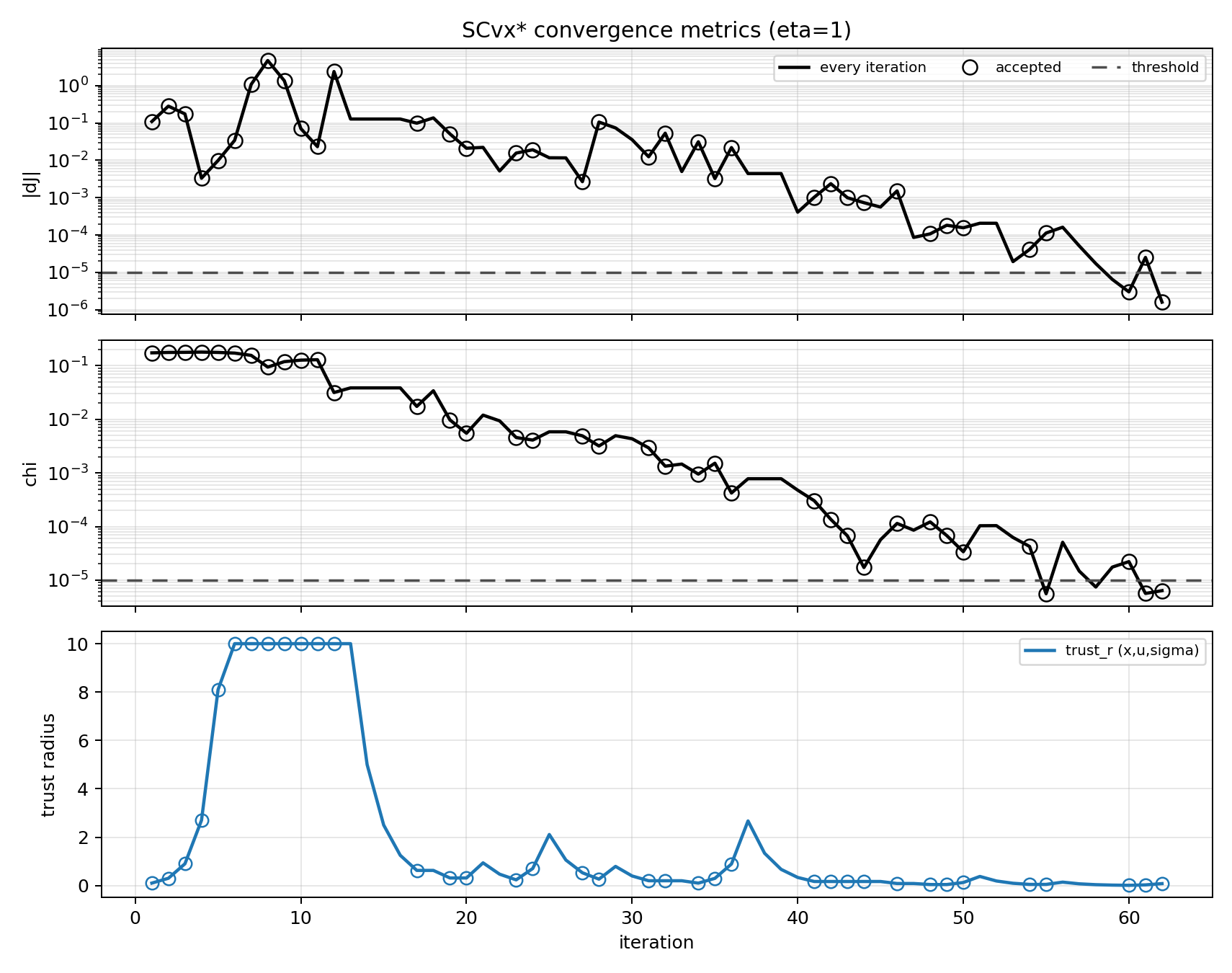}
    \caption{FFT-iCS convergence metrics.}
    \label{fig:scvx-conv}
\end{figure}
The SCvx* algorithm successfully converges in 62 iterations, and the converged solution satisfies both the state terminal covariance constraints as well as the control input chance constraints; Figure~\ref{fig:optimal-trajectories} shows the Monte Carlo state and control samples under the converged optimal policy $(K_k^\star, v_k^\star)$.
Even when simulated against the full non-linear SDE \eqref{eq:sde-double-int}, we find that the empirical terminal position variance is $\sigma_{p,N}^{\textrm{mc}} = 0.135 < \sigma_{p,f} = \sqrt{0.15}$ and the true worst-case control input risk over the entire horizon is $\max_{j,k} \delta_{j,k}^\star = 0.078 < \delta_{j,k} = 0.1$.
\begin{figure}[!htb]
    \centering
    \begin{subfigure}[t]{\linewidth}
        \centering
        \includegraphics[width=\linewidth]{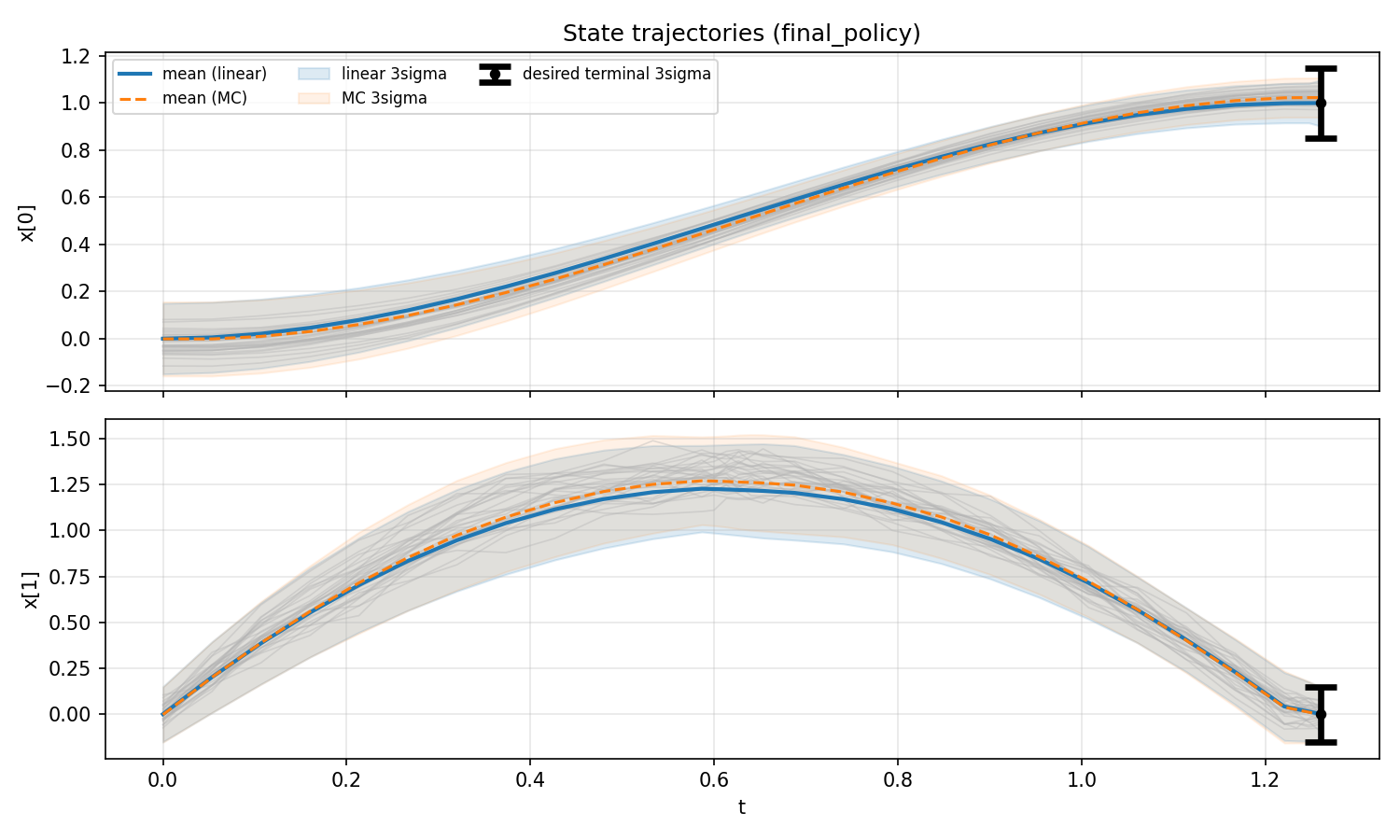}
        \caption{Mean state and $3\sigma$ state covariance trajectories with respect to linear (blue) and nonlinear (orange) models.}
        \label{fig:optimal-states}
    \end{subfigure}

    \vspace{0.5em}

    \begin{subfigure}[t]{\linewidth}
        \centering
        \includegraphics[width=\linewidth]{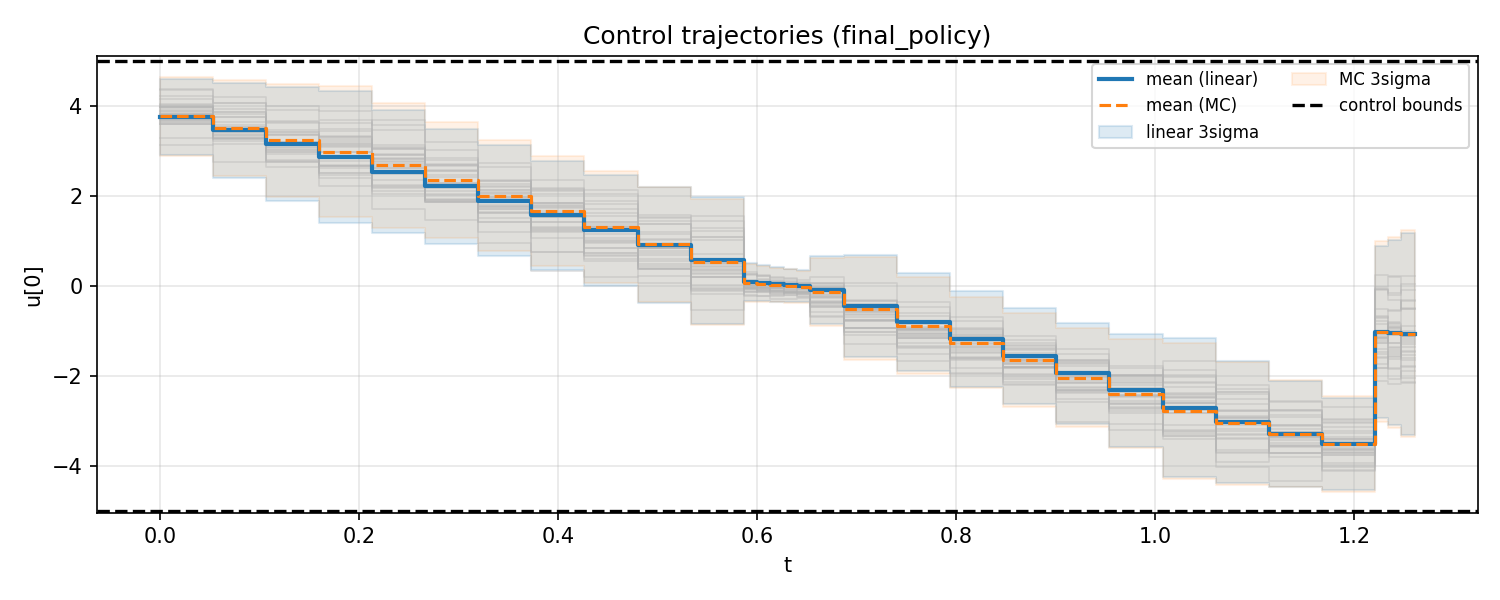}
        \caption{Mean control and $3\sigma$ control covariance trajectories with respect to linear (blue) and nonlinear (orange) models.}
        \label{fig:optimal-controls}
    \end{subfigure}
    \caption{Converged FFT-iCS optimal state and control trajectories for $\eta=1$.}
    \label{fig:optimal-trajectories}
\end{figure}

\subsection{Multiplicative Noise}~\label{subsec:mult-noise-analysis}
To illustrate the proposed \textit{full} diffusion linearization of the proposed approach, we now set a large multiplicative noise via $g_1 = 1$ and compare the resulting converged optimal trajectories with that of a \textit{frozen} diffusion linearization approach (e.g., \cite{Ridderhof2019iCS,Oguri2022iCS,Benedikter2022iCS}).
For these simulations, we change the SCvx* parameters to $\beta = 1.5,\, \Delta^{(0)} = 0.03,\, \Delta_{\max} = 0.5$, and both the full and frozen diffusion linearization methods converge in 16 and 33 iterations, respectively.
Figure~\ref{fig:frozen-full-3sigma} shows the evolution of the state covariance for both the (proposed) full diffusion linearization method as well the frozen diffusion method.
It is clear that even though both methods successfully steer the terminal state covariance with respect to the linearized system to its intended final value (all solid lines align at $t_f$), the discrepancy in the true nonlinear covariance (estimated through MC sample covariance) is much different.
Indeed, for the position, we have that $\sigma_{p,N}^{\textrm{full},\textrm{mc}} - \sigma_{p,f} = -0.002$ while $\sigma_{p,N}^{\textrm{frozen},\textrm{mc}} - \sigma_{p,f} = 0.04$, therefore the proposed method is not only nonlinear feasible at convergence, but also enjoys a 95\% improvement in covariance propagation (and reduction) accuracy.

\begin{figure}[!htb]
    \centering
    \includegraphics[width=\linewidth]{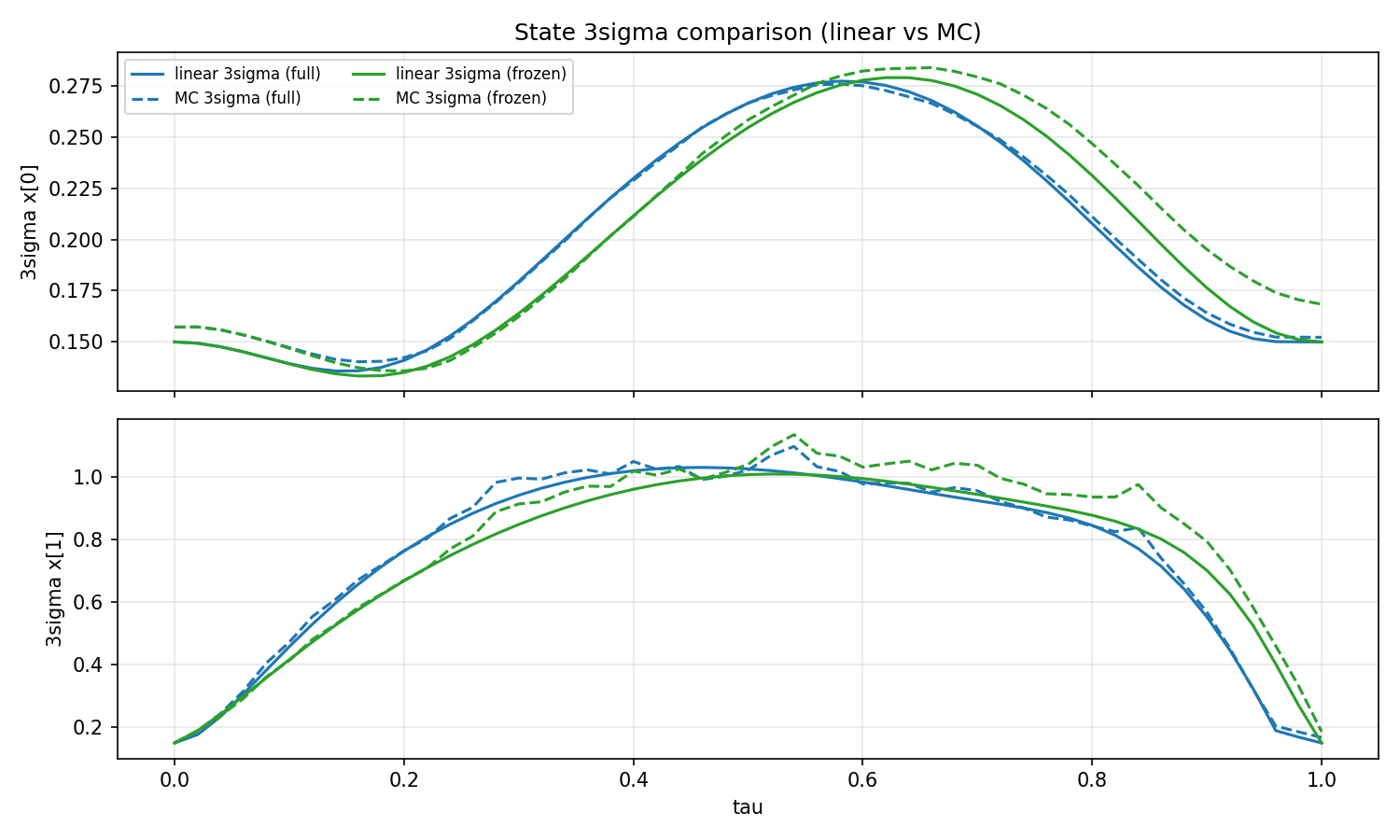}
    \caption{State $3\sigma$ standard deviation under (solid) linear covariance propagation, and (dashed) empirical nonlinear SDE Monte Carlo.}
    \label{fig:frozen-full-3sigma}
\end{figure}

\section{Conclusion}~\label{sec:conclusion}
In this work, we presented a free-final-time iterative covariance-steering (\texttt{FFT-iCS}) method that jointly optimizes interval-wise time allocation and an affine deviation-feedback policy for nonlinear SDEs with additive and multiplicative diffusion. The approach is built on a locally valid discrete-time model that preserves the first-order diffusion linearization under time dilation, leading to tractable SDP subproblems with terminal moment constraints and distributionally robust chance constraints. Future work will target higher-dimensional guidance applications and extensions to stopping-time/first-reach specifications.

%%%%%%%%%%%%%%%%%%%%%%%%%%%%%%%%%%%%%%%%%%%%%%%%%%%%%%%%%%%%%%%%%%%%%%%%%%%%%%%%
\appendix
\section{APPENDIX}

\subsection{Continuous-time linearization}~\label{app:ct-lin}
The matrices and vectors involved in the linearization of \eqref{eq:nl-sde-scaled} are computed as follows.
For the drift term $\sigma f(x,u)$,
\begin{align*}
    A_{\tau} &= \hat{\sigma}_{\tau}\frac{\partial f}{\partial x}\bigg|_{\hat{z}_{\tau}}, \qquad
    B_{\tau} = \hat{\sigma}_{\tau}\frac{\partial f}{\partial u}\bigg|_{\hat{z}_{\tau}}, \qquad
    c_{\tau} = f(\hat{x}_{\tau},\hat{u}_{\tau}),\\
    d_{\tau} &= \hat{\sigma}_\tau f(\hat{x}_\tau,\hat{u}_\tau) - A_{\tau} \hat{x}_{\tau} - B_{\tau} \hat{u}_{\tau} - c_{\tau}\hat{\sigma}_\tau \\ 
    &= -A_{\tau}\hat{x}_{\tau} - B_{\tau}\hat{u}_{\tau}.
\end{align*}
For each diffusion channel $\sqrt{\sigma}\,g_i(x,u)$,
\begin{align*}
    \tilde{A}_{\tau}^{(i)} &= \sqrt{\hat{\sigma}_{\tau}}\frac{\partial g_i}{\partial x}\bigg|_{\hat{z}_\tau}\qquad
    \tilde{B}_{\tau}^{(i)} = \sqrt{\hat{\sigma}_{\tau}}\frac{\partial g_i}{\partial u}\bigg|_{\hat{z}_{\tau}}\\ 
    \tilde{c}_{\tau}^{(i)} &= \frac{1}{2\sqrt{\hat{\sigma}_\tau}} g_i(\hat{x}_{\tau},\hat{u}_{\tau}),\\
    \tilde{d}_{\tau}^{(i)} &= \sqrt{\hat{\sigma}_{\tau}}g_i(\hat{x}_\tau,\hat{u}_\tau)-\tilde{A}_{\tau}^{(i)} \hat{x}_{\tau} - \tilde{B}_{\tau}^{(i)} \hat{u}_{\tau} - \tilde{c}_{\tau}^{(i)}\hat{\sigma}_\tau \\
    &=\frac{1}{2}\sqrt{\hat{\sigma}_\tau} g_i(\hat{x}_\tau,\hat{u}_\tau) - \tilde{A}_{\tau}^{(i)} \hat{x}_{\tau} - \tilde{B}_{\tau}^{(i)} \hat{u}_{\tau}.
\end{align*}

%%%%%%%%%%%%%%%%%%%%%%%%%%%%%%%%%%%%%%%%%%%%%%%%%%%%%%%%%%%%%%%%%%%%%%%%%%%%%%%%

\subsection{Exact mild solution and discretization terms}~\label{app:exact-disc}
As mentioned in the main text, the (exact) mild solution of \eqref{eq:linear-sde-aug} on $[\tau_k,\tau_{k+1}]$ satisfies \eqref{eq:exact-discrete}, where the deterministic discretization terms are
\begin{subequations}\label{eq:disc-terms-drift}
\begin{align}
    A_k &\coloneq \Phi(\tau_{k+1},\tau_k), \\ 
    F_k &\coloneq \int_{\tau_k}^{\tau_{k+1}}\Phi(\tau_{k+1},\tau)F_\tau\,\d\tau, \\ 
    d_k &\coloneq \int_{\tau_k}^{\tau_{k+1}}\Phi(\tau_{k+1},\tau)d_\tau\,\d\tau.
\end{align}
\end{subequations}
In addition, for the projected diffusion discretization used in \eqref{eq:Hbar-affine-cols}, the averaged (projected) matrices/vectors are given as
\begin{subequations}~\label{eq:disc-terms-averaged}
\begin{align}
    \tilde A^{(i)}_k
    &:=
    \frac{1}{\Delta\tau_k}\int_{\tau_k}^{\tau_{k+1}}\Phi(\tau_{k+1},\tau)\tilde A^{(i)}_\tau\,\d\tau,\label{eq:disc-terms-diffA}\\
    \tilde F^{(i)}_k
    &:=
    \frac{1}{\Delta\tau_k}\int_{\tau_k}^{\tau_{k+1}}\Phi(\tau_{k+1},\tau)\tilde F^{(i)}_\tau\,\d\tau,\label{eq:disc-terms-diffF}\\
    \tilde d^{(i)}_k
    &:=
    \frac{1}{\Delta\tau_k}\int_{\tau_k}^{\tau_{k+1}}\Phi(\tau_{k+1},\tau)\tilde d^{(i)}_\tau\,\d\tau.\label{eq:disc-terms-diffd}
\end{align}
\end{subequations}

The integrals in \eqref{eq:disc-terms-drift}--\eqref{eq:disc-terms-averaged} can be computed without numerical quadrature by integrating a single augmented linear ODE forward on each interval.
Define, for $\tau\in[\tau_k,\tau_{k+1}]$, the matrix-valued functions
\begin{subequations}~\label{eq:S-def1}
\begin{align}
    S_F(\tau) &:= \int_{\tau_k}^{\tau}\Phi(\tau,s)F_s\,\d s \in \R^{n\times(m+1)},\label{eq:SF-def}\\
    s_d(\tau) &:= \int_{\tau_k}^{\tau}\Phi(\tau,s)d_s\,\d s \in \R^{n},\label{eq:sd-def}
\end{align}
\end{subequations}
and for each diffusion channel $i\in\{1,\ldots,d\}$,
\begin{subequations}~\label{eq:S-def2}
\begin{align}
    S_{\tilde A}^{(i)}(\tau) &:= \int_{\tau_k}^{\tau}\Phi(\tau,s)\tilde A^{(i)}_s\,\d s \in \R^{n\times n},\label{eq:SAtil-def}\\
    S_{\tilde F}^{(i)}(\tau) &:= \int_{\tau_k}^{\tau}\Phi(\tau,s)\tilde F^{(i)}_s\,\d s \in \R^{n\times(m+1)},\label{eq:SFtil-def}\\
    s_{\tilde d}^{(i)}(\tau) &:= \int_{\tau_k}^{\tau}\Phi(\tau,s)\tilde d^{(i)}_s\,\d s \in \R^{n}.\label{eq:sdtil-def}
\end{align}
\end{subequations}
Differentiating the definitions in \eqref{eq:S-def1}--\eqref{eq:S-def2} and using \eqref{eq:stm-def} yields the coupled ODEs
\begin{subequations}
\begin{align}
    \dot \Phi(\tau) &= A_\tau \Phi(\tau), & \Phi(\tau_k)&=I,\label{eq:aug-phi}\\
    \dot S_F(\tau) &= A_\tau S_F(\tau)+F_\tau, & S_F(\tau_k)&=0,\label{eq:aug-SF}\\
    \dot s_d(\tau) &= A_\tau s_d(\tau)+d_\tau, & s_d(\tau_k)&=0,\label{eq:aug-sd}\\
    \dot S_{\tilde A}^{(i)}(\tau) &= A_\tau S_{\tilde A}^{(i)}(\tau)+\tilde A^{(i)}_\tau, & S_{\tilde A}^{(i)}(\tau_k)&=0,\label{eq:aug-SAt}\\
    \dot S_{\tilde F}^{(i)}(\tau) &= A_\tau S_{\tilde F}^{(i)}(\tau)+\tilde F^{(i)}_\tau, & S_{\tilde F}^{(i)}(\tau_k)&=0,\label{eq:aug-SFt}\\
    \dot s_{\tilde d}^{(i)}(\tau) &= A_\tau s_{\tilde d}^{(i)}(\tau)+\tilde d^{(i)}_\tau, & s_{\tilde d}^{(i)}(\tau_k)&=0.\label{eq:aug-sdt}
\end{align}
\end{subequations}
Evaluating at $\tau=\tau_{k+1}$ gives, directly,
\begin{equation}~\label{eq:disc-from-aug}
    A_k=\Phi(\tau_{k+1}),\quad
    F_k=S_F(\tau_{k+1}),\quad
    d_k=s_d(\tau_{k+1}),
\end{equation}
and similarly, for each $i$,
\begin{equation}~\label{eq:disc-from-aug-diff}
\begin{aligned}
    \tilde A^{(i)}_k&=\frac{1}{\Delta\tau_k}S_{\tilde A}^{(i)}(\tau_{k+1}),\quad
    \tilde F^{(i)}_k=\frac{1}{\Delta\tau_k}S_{\tilde F}^{(i)}(\tau_{k+1}), \\
    \tilde d^{(i)}_k&=\frac{1}{\Delta\tau_k}s_{\tilde d}^{(i)}(\tau_{k+1}). 
\end{aligned}
\end{equation}

For efficient implementation, it is convenient to stack all unknowns into one augmented matrix so that \eqref{eq:aug-phi}--\eqref{eq:aug-sdt} are solved in one ODE call.
Let
\begin{align}
M(\tau)
&:=
\Big[
\ \Phi(\tau)\ \ \ S_F(\tau)\ \ \ s_d(\tau)\ \ \ S_{\tilde A}^{(1)}(\tau)\ \cdots\ S_{\tilde A}^{(d)}(\tau) \nonumber\\
&S_{\tilde F}^{(1)}(\tau)\ \cdots\ S_{\tilde F}^{(d)}(\tau)\ \ \ s_{\tilde d}^{(1)}(\tau)\ \cdots\ s_{\tilde d}^{(d)}(\tau)\
\Big], \label{eq:stacked-M}
\end{align}
where we interpret each vector $s(\tau)\in\R^n$ as an $n\times 1$ block.
Then $M(\tau)$ satisfies the compact linear ODE
\begin{equation}\label{eq:M-ode}
    \dot M(\tau)=A_\tau M(\tau)+R(\tau),\quad M(\tau_k)=\big[I\ \ 0\ \ 0\ \cdots\ 0\big],
\end{equation}
with a right-hand side matrix $R(\tau)$ formed by horizontally stacking the corresponding inhomogeneities
\begin{align*}
    R(\tau)=\big[\ 0\ \ F_\tau\ \ d_\tau\ \ &\tilde A_\tau^{(1)}\ \cdots\ \tilde A_\tau^{(d)}\\ 
    &\hspace{15pt}\tilde F_\tau^{(1)}\ \cdots\ \tilde F_\tau^{(d)}\ \ \tilde d_\tau^{(1)}\ \cdots\ \tilde d_\tau^{(d)}\ \big].
\end{align*}
Thus, for each $k$, one integrates \eqref{eq:M-ode} over $[\tau_k,\tau_{k+1}]$ and then extracts the blocks at $\tau_{k+1}$ according to \eqref{eq:disc-from-aug}--\eqref{eq:disc-from-aug-diff}.
Given a reference trajectory $\hat z_\tau$ (hence known coefficient functions on each interval), the ODE \eqref{eq:M-ode} is \emph{independent across $k$}. Therefore the set $\{A_k,F_k,d_k,\tilde A_k^{(i)},\tilde F_k^{(i)},\tilde d_k^{(i)}\}_{k=0}^{N-1}$ can be computed in parallel over $k$ (and vectorized over the stacked blocks in $M$ within each interval).

%%%%%%%%%%%%%%%%%%%%%%%%%%%%%%%%%%%%%%%%%%%%%%%%%%%%%%%%%%%%%%%%%%%%%%%%%%%%%%%%

\subsection{Optimality of diffusion projection}~\label{app:proj-opt}

\begin{lem}[Orthogonal projection]~\label{lem:proj-opt}
    Fix the time step $k\in\{0,\ldots,N-1\}$ and condition on $\scriptF_{\tau_k}$.
    Among all constant matrices $G\in\R^{n\times d}$, the choice $G=\bar H_{k}$, with $\bar H_{k}$ given in \eqref{eq:Hbar}, uniquely minimizes the conditional mean-square error
    \begin{equation}\label{eq:proj-mse-opt}
        \E_k\!\left[\left\|\int_{\tau_k}^{\tau_{k+1}} H_{k}(\tau)\,\d\vct w_\tau - G\,\Delta\vct w_{k}\right\|^2\right].
    \end{equation}
\end{lem}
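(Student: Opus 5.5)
The plan is to rewrite the mean-square error as a quadratic function of $G$ via the conditional It\^{o} isometry and then complete the square. Conditioning on $\scriptF_{\tau_k}$, the integrand $H_k(\tau)=H_k(\tau;\vct x_k,\tilde{\vct u}_k)$ is deterministic on $[\tau_k,\tau_{k+1}]$. The Brownian increments on this interval are independent of $\scriptF_{\tau_k}$, so $\int_{\tau_k}^{\tau_{k+1}} H_k(\tau)\,\d\vct w_\tau$ is, conditionally, a Wiener integral of a deterministic matrix function. Writing $G\,\Delta\vct w_k=\int_{\tau_k}^{\tau_{k+1}} G\,\d\vct w_\tau$, the difference in \eqref{eq:proj-mse-opt} becomes the single Wiener integral $\int_{\tau_k}^{\tau_{k+1}}\big(H_k(\tau)-G\big)\,\d\vct w_\tau$.

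Applying the (conditional) It\^{o} isometry for $d$ independent channels gives
\begin{equation*}
\E_k\!\left[\left\|\int_{\tau_k}^{\tau_{k+1}} (H_k(\tau)-G)\,\d\vct w_\tau\right\|^2\right]=\int_{\tau_k}^{\tau_{k+1}}\|H_k(\tau)-G\|_F^2\,\d\tau .
\end{equation*}
This requires $H_k(\cdot)$ to be square integrable on the interval. That holds because $\Phi$ and the diffusion coefficients are continuous under the standing smoothness assumptions, and $\vct x_k,\tilde{\vct u}_k$ are fixed given $\scriptF_{\tau_k}$.

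Next I decompose $H_k(\tau)-G=(H_k(\tau)-\bar H_k)+(\bar H_k-G)$ with $\bar H_k$ from \eqref{eq:Hbar}. The cross term $2\int_{\tau_k}^{\tau_{k+1}}\mathrm{tr}\big((H_k(\tau)-\bar H_k)^\intercal(\bar H_k-G)\big)\,\d\tau$ vanishes, since $\int_{\tau_k}^{\tau_{k+1}}(H_k(\tau)-\bar H_k)\,\d\tau=0$ by the definition of $\bar H_k$ as the time average. Hence
\begin{equation*}
\text{MSE}(G)=\int_{\tau_k}^{\tau_{k+1}}\|H_k(\tau)-\bar H_k\|_F^2\,\d\tau+\Delta\tau_k\|\bar H_k-G\|_F^2 .
\end{equation*}
Because $\Delta\tau_k>0$, this is minimized exactly when $G=\bar H_k$, and the minimizer is unique by strict convexity of $\|\cdot\|_F^2$. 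This is precisely the orthogonal projection of $H_k$ in $L^2([\tau_k,\tau_{k+1}];\R^{n\times d})$ onto the constants.

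The main subtlety is justifying the conditional isometry when the integrand depends on the $\scriptF_{\tau_k}$-measurable random quantities $(\vct x_k,\tilde{\vct u}_k)$. I would handle it by the freezing/independence lemma: compute the expectation for fixed deterministic values $(x,\tilde u)$, where the standard isometry applies, and then substitute $(\vct x_k,\tilde{\vct u}_k)$ using independence of the increments from $\scriptF_{\tau_k}$. The ``constant'' $G$ may likewise be any $\scriptF_{\tau_k}$-measurable matrix, and the argument applies verbatim pathwise.
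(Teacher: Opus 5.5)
Your proof is correct and follows the paper's route: write $G\,\Delta\vct w_k=\int_{\tau_k}^{\tau_{k+1}}G\,\d\vct w_\tau$, apply the conditional It\^{o} isometry to reduce to the deterministic least-squares problem $\min_G\int_{\tau_k}^{\tau_{k+1}}\|H_k(\tau)-G\|_F^2\,\d\tau$, and identify the time average $\bar H_k$ as its unique minimizer. The only difference is cosmetic: you complete the square, whereas the paper sets $\nabla_G J(G)=0$ and appeals to strict convexity, so your version gives uniqueness immediately and exhibits the minimal residual $\int_{\tau_k}^{\tau_{k+1}}\|H_k(\tau)-\bar H_k\|_F^2\,\d\tau$, which is exactly the quantity bounded in Step~2 of the proof of Theorem~\ref{thm:one-step-error}.
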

\begin{proof}
Using $\Delta \vct w_k=\vct w_{\tau_{k+1}}-\vct w_{\tau_k}
=\int_{\tau_k}^{\tau_{k+1}} I_d\,\d \vct w_\tau$, it follows that
\[
G\,\Delta\vct w_k=\int_{\tau_k}^{\tau_{k+1}} G\,\d \vct w_\tau,
\]
and therefore
\[
\int_{\tau_k}^{\tau_{k+1}} H_k(\tau)\,\d\vct w_\tau - G\,\Delta\vct w_k
=
\int_{\tau_k}^{\tau_{k+1}} (H_k(\tau)-G)\,\d\vct w_\tau.
\]
Conditioned on $\scriptF_{\tau_k}$, the integrand is square-integrable and deterministic (under the local one-step approximation in \eqref{eq:freeze-x}), so the conditional It\^{o} isometry gives
\begin{align}
J(G)
&:=\E_k\!\left[\left\|\int_{\tau_k}^{\tau_{k+1}} H_k(\tau)\,\d\vct w_\tau - G\,\Delta\vct w_k\right\|^2\right] \nonumber\\
&= \E_k\!\left[\left\|\int_{\tau_k}^{\tau_{k+1}} (H_k(\tau)-G)\,\d\vct w_\tau\right\|^2\right] \nonumber\\
&= \int_{\tau_k}^{\tau_{k+1}}\|H_k(\tau)-G\|_F^2\,\d\tau. \label{eq:proj-proof-JG}
\end{align}
Hence the stochastic minimization reduces to the deterministic least-squares problem
\[
\min_{G\in\R^{n\times d}} \int_{\tau_k}^{\tau_{k+1}}\|H_k(\tau)-G\|_F^2\,\d\tau.
\]
Let $\Delta\tau_k:=\tau_{k+1}-\tau_k$ and define
\[
\bar H_k := \frac{1}{\Delta\tau_k}\int_{\tau_k}^{\tau_{k+1}} H_k(\tau)\,\d\tau.
\]
Expanding the square and differentiating with respect to $G$ yields
\[
\nabla_G J(G)
=
-2\int_{\tau_k}^{\tau_{k+1}} H_k(\tau)\,\d\tau + 2\Delta\tau_k\,G.
\]
Setting $\nabla_G J(G)=0$ gives the unique minimizer
\[
G^\star = \frac{1}{\Delta\tau_k}\int_{\tau_k}^{\tau_{k+1}} H_k(\tau)\,\d\tau = \bar H_k,
\]
where uniqueness follows from strict convexity of $J$ in $G$ (equivalently, its Hessian is $2\Delta\tau_k I \succ 0$ in vectorized coordinates).
\end{proof}

%%%%%%%%%%%%%%%%%%%%%%%%%%%%%%%%%%%%%%%%%%%%%%%%%%%%%%%%%%%%%%%%%%%%%%%%%%%%%%%%

\subsection{Assumptions and auxiliary lemmas for Theorem~\ref{thm:one-step-error}}~\label{app:assums-and-proofs}

We state a local regularity assumption directly on the \emph{time-normalized nonlinear SDE coefficients}, from which the bounds used in the one-step error analysis follow.

\begin{assum}[Local smoothness]\label{assum:local-C1}
Fix a time step $k\in\{0,\ldots,N-1\}$ and interval $\mathcal{T}_k:=[\tau_k,\tau_{k+1}]$.
Consider the nonlinear SDE \eqref{eq:nl-sde-scaled}, where
$f:\mathcal{T}_k\times\R^n\times\R^m\to\R^n$ and
$g^{(i)}:\mathcal{T}_k\times\R^n\times\R^m\to\R^n$ for $i=1,\ldots,d$.
Assume the following:
\begin{enumerate}
    \item \textbf{Local smoothness.}
    The map $f$ is $C^1$ in $(\vct x,\vct u)$ and continuous in $\tau$, and each $g^{(i)}$ is $C^{1,1}$ in $(\vct x,\vct u)$ and continuous in $\tau$, on a neighborhood of a compact set
    $\mathcal K_k\subset \R^n\times\R^m$.

    \item \textbf{Bounded reference tube.}
    The current SCP reference pair $(\hat{\vct x}_\tau,\hat{\vct u}_\tau)$ satisfies
    \[
    (\hat{\vct x}_\tau,\hat{\vct u}_\tau)\in \mathcal K_k,
    \qquad \forall \tau\in \mathcal{T}_k,
    \]
    and $\tau\mapsto (\hat{\vct x}_\tau,\hat{\vct u}_\tau)$ is Lipschitz on $\mathcal{T}_k$.
\end{enumerate}
\end{assum}

\begin{rem}\label{rem:local-boundedness}
Assumption~\ref{assum:local-C1} is a \emph{local one-step} regularity condition.
In SCP implementations, the iterate is kept in a bounded neighborhood of the current reference by trust-region constraints, so the compact set $\mathcal K_k$ is natural.
By continuity on the compact set $\mathcal T_k\times\mathcal K_k$, all coefficient values and Jacobians needed for the local linearization are uniformly bounded on $\mathcal T_k$. Consequently, the associated state transition matrix (STM) is uniformly bounded on $\mathcal T_k$, and for fixed $(\vct x_k,\tilde{\vct u}_k)$ the frozen integrand $\tau\mapsto H_k(\tau;\vct x_k,\tilde{\vct u}_k)$ is Lipschitz on $\mathcal T_k$.
\end{rem}

\begin{lem}[Local conditional moment bounds]\label{lem:local-moment-increment}
Under Assumption~\ref{assum:local-C1}, there exist deterministic constants
$C_k^{(2)},\,C_k^{(\Delta)}<\infty$ such that the following hold for the local linearized SDE under ZOH for all $\tau\in\mathcal T_k$:
\begin{enumerate}
    \item[(i)]
    \begin{equation}\label{eq:second-moment-bound}
    \sup_{s\in[\tau_k,\tau]}\E_k\|\vct x_s\|^2
    \le
    C_k^{(2)}\bigl(1+\|\vct x_k\|^2+\|\tilde{\vct u}_k\|^2\bigr)
    \quad \text{a.s.}
    \end{equation}

    \item[(ii)]
    \begin{equation}\label{eq:ms-increment}
        \E_k\!\left[\|\vct x_\tau-\vct x_k\|^2\right]
        \le C_k^{(\Delta)}\,(\tau-\tau_k)\bigl(1+\|\vct x_k\|^2+\|\tilde{\vct u}_k\|^2\bigr).
    \end{equation}
\end{enumerate}
\end{lem}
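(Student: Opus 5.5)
The plan is to work directly with the local linearized SDE \eqref{eq:linear-sde-aug} on $\mathcal T_k$ under the ZOH \eqref{eq:zoh}, conditioned on $\scriptF_{\tau_k}$. Under this conditioning, $\vct x_k$ and $\tilde{\vct u}_k$ are fixed vectors. By Assumption~\ref{assum:local-C1} and Remark~\ref{rem:local-boundedness}, the coefficient functions $A_\tau,F_\tau,d_\tau,\tilde A^{(i)}_\tau,\tilde F^{(i)}_\tau,\tilde d^{(i)}_\tau$ are continuous on the compact interval $\mathcal T_k$. Their sup-norms are therefore bounded by a single deterministic constant $M_k$. We also have $\Delta\tau_k\le 1$.

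Integral form. For $s\in\mathcal T_k$ write
\[
\vct x_s-\vct x_k=\int_{\tau_k}^{s}(A_r\vct x_r+F_r\tilde{\vct u}_k+d_r)\,\d r+\sum_{i=1}^d\int_{\tau_k}^{s}(\tilde A^{(i)}_r\vct x_r+\tilde F^{(i)}_r\tilde{\vct u}_k+\tilde d^{(i)}_r)\,\d\vct w^{(i)}_r .
\]
Square both sides and use $\|a+b+c\|^2\le 3(\cdots)$. Apply Cauchy--Schwarz to the drift integral and the conditional It\^o isometry to each stochastic integral; the Brownian channels are independent of each other and of $\scriptF_{\tau_k}$. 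This gives, with $\phi(s):=\E_k\|\vct x_s\|^2$ and $c:=1+\|\vct x_k\|^2+\|\tilde{\vct u}_k\|^2$,
\[
\E_k\|\vct x_s-\vct x_k\|^2\le C\big((s-\tau_k)+1\big)\int_{\tau_k}^{s}\big(\phi(r)+c\big)\,\d r .
\]
Consequently $\phi(s)\le 2\|\vct x_k\|^2+C'\int_{\tau_k}^s(\phi(r)+c)\,\d r$. Gronwall's inequality then yields $\phi(s)\le c\,e^{C'\Delta\tau_k}(2+C'\Delta\tau_k)$. Since the right-hand side does not depend on $s$, taking the supremum gives (i) with $C_k^{(2)}$ deterministic.

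Finiteness of $\phi$. Gronwall requires $\phi$ to be finite a priori. The linear SDE with bounded coefficients has a unique strong solution with finite conditional second moments, so this holds. If one prefers not to assume it, I would use a localization/stopping-time argument with $\tau_n=\inf\{s:\|\vct x_s\|\ge n\}$ and let $n\to\infty$ by Fatou's lemma. This is the only technically delicate point. It is standard, so I expect no real obstacle.

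For (ii), substitute the bound from (i) into the increment estimate. The integral over $[\tau_k,\tau]$ of $\phi(r)+c\le (C_k^{(2)}+1)c$ is at most $(C_k^{(2)}+1)c(\tau-\tau_k)$. The prefactor $(\tau-\tau_k)+1\le 2$, so the drift contributes $O((\tau-\tau_k)^2)\le O(\tau-\tau_k)$ and the diffusion contributes $O(\tau-\tau_k)$. This gives \eqref{eq:ms-increment} with $C_k^{(\Delta)}=6M_k^2(C_k^{(2)}+1)(1+d)$ or a similar constant. All constants depend only on $M_k$, $d$ and $\Delta\tau_k$, so they are deterministic and the bounds hold almost surely.
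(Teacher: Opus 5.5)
Your argument is correct. Part (ii) matches the paper's proof: integral form of the increment, Cauchy--Schwarz on the drift, conditional It\^o isometry on the diffusion, then the bound from (i).

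The difference is in part (i). The paper applies It\^o's formula to $\|\vct x_s\|^2$. It then removes the stochastic term $\int 2\vct x_r^\top\Sigma_r\,\d\vct w_r$ (with $\Sigma_r$ the diffusion matrix) by taking $\E_k$, and bounds the remaining drift with $2x^\top b\le\|x\|^2+\|b\|^2$ and the linear-growth estimate. Finally it applies Gronwall directly to $\E_k\|\vct x_s\|^2$. You instead derive a single increment inequality, $\E_k\|\vct x_s-\vct x_k\|^2\lesssim\int_{\tau_k}^s(\phi(r)+c)\,\d r$, from the integral equation. You then obtain (i) from it via $\|\vct x_s\|^2\le 2\|\vct x_k\|^2+2\|\vct x_s-\vct x_k\|^2$ and Gronwall.

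Your route has three advantages:
\begin{itemize}
\item One estimate yields both (i) and (ii).
\item You need the It\^o isometry only at the second-moment level. The paper instead uses the martingale property of $\int 2\vct x_r^\top\Sigma_r\,\d\vct w_r$, which without localization is usually justified through fourth-moment bounds.
\item You make the a priori finiteness/localization step explicit, whereas the paper leaves it implicit.
\end{itemize}

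The paper's It\^o-formula route gives a slightly sharper constant. Its coefficient of $\|\vct x_k\|^2$ is $e^{C\Delta\tau_k}\to 1$, rather than your $(2+C'\Delta\tau_k)e^{C'\Delta\tau_k}\to 2$. It also extends more directly to higher moments. For the lemma as stated, where such factors are absorbed into $C_k^{(2)}$ and $C_k^{(\Delta)}$, the two routes are interchangeable.
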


\begin{proof}
For $s\in[\tau_k,\tau]$, write the local linearized SDE under ZOH as
\[
\d\vct x_s=b_s\,\d s+\Sigma_s\,\d\vct w_s,
\]
with
\[
b_s:=A_s\vct x_s+F_s\tilde{\vct u}_k+d_s,
\]
and
\begin{align*}
\Sigma_s=
\big[
\tilde A_s^{(1)}\vct x_s+\tilde F_s^{(1)}&\tilde{\vct u}_k+\tilde d_s^{(1)},
\ldots \\ 
&\tilde A_s^{(d)}\vct x_s+\tilde F_s^{(d)}\tilde{\vct u}_k+\tilde d_s^{(d)}
\big]\in\R^{n\times d}.
\end{align*}
By Assumption~\ref{assum:local-C1} and Remark~\ref{rem:local-boundedness}, all coefficient matrices/vectors
$A_s,F_s,d_s,\tilde A_s^{(i)},\tilde F_s^{(i)},\tilde d_s^{(i)}$
are uniformly bounded on $\mathcal T_k$.
Hence there exists a deterministic constant $C>0$ such that, for all $s\in\mathcal T_k$,
\begin{equation}\label{eq:bs-Sigmas-linear-growth}
\|b_s\|^2+\|\Sigma_s\|_\mathrm{F}^2
\le
C\left(1+\|\vct x_s\|^2+\|\tilde{\vct u}_k\|^2\right)
\qquad \text{a.s.}
\end{equation}

We first prove \eqref{eq:second-moment-bound}. Apply It\^{o}'s formula to $\phi(x)=\|x\|^2=x^\top x$. Since $\nabla\phi(x)=2x$ and $\nabla^2\phi(x)=2I$,
\[
\d\|\vct x_s\|^2
=
\left(2\vct x_s^\top b_s+\|\Sigma_s\|_{\mathrm{F}}^2\right)\d s
+2\vct x_s^\top\Sigma_s\,\d\vct w_s.
\]
Integrating from $\tau_k$ to $s$ yields
\[
\|\vct x_s\|^2
=
\|\vct x_k\|^2
+\int_{\tau_k}^{s}\left(2\vct x_r^\top b_r+\|\Sigma_r\|_{\mathrm{F}}^2\right)\d r
+\int_{\tau_k}^{s}2\vct x_r^\top\Sigma_r\,\d\vct w_r.
\]
Now take the conditional expectation given $\scriptF_{\tau_k}$. Since $\vct x_k$ is $\scriptF_{\tau_k}$-measurable and the It\^{o} integral has zero conditional mean,
\[
\E_k\|\vct x_s\|^2
=
\|\vct x_k\|^2
+\int_{\tau_k}^{s}\E_k\!\left[2\vct x_r^\top b_r+\|\Sigma_r\|_{\mathrm{F}}^2\right]\d r.
\]
Using $2x^\top b\le \|x\|^2+\|b\|^2$ gives
\[
\E_k\|\vct x_s\|^2
\le
\|\vct x_k\|^2
+\int_{\tau_k}^{s}\E_k\!\left[\|\vct x_r\|^2+\|b_r\|^2+\|\Sigma_r\|_{\mathrm{F}}^2\right]\d r.
\]
Applying \eqref{eq:bs-Sigmas-linear-growth} and absorbing constants into $C$,
\[
\E_k\|\vct x_s\|^2
\le
\|\vct x_k\|^2
+ C\int_{\tau_k}^{s}\left(1+\|\tilde{\vct u}_k\|^2\right)\d r
+ C\int_{\tau_k}^{s}\E_k\|\vct x_r\|^2\,\d r.
\]
Define $y(s):=\E_k\|\vct x_s\|^2$. Since $s-\tau_k\le \Delta\tau_k$,
\[
y(s)\le A_k + C\int_{\tau_k}^{s}y(r)\,\d r,
\,
A_k:=\|\vct x_k\|^2+C\Delta\tau_k\left(1+\|\tilde{\vct u}_k\|^2\right).
\]
By Gronwall's inequality,
\[
y(s)\le A_k e^{C(s-\tau_k)}\le A_k e^{C\Delta\tau_k},\qquad s\in[\tau_k,\tau].
\]
Absorbing deterministic factors depending only on local regularity data and $\Delta\tau_k$ into a constant $C_k^{(2)}$ gives \eqref{eq:second-moment-bound}.

We now prove \eqref{eq:ms-increment}. Write
\[
\vct x_\tau-\vct x_k
=
\int_{\tau_k}^{\tau} b_s\,\d s + \int_{\tau_k}^{\tau}\Sigma_s\,\d\vct w_s.
\]
Using a similar procedure to that utilized above, we obtain the bound
\begin{align*}
\E_k\|\vct x_\tau-\vct x_k\|^2
&\le
2(\tau-\tau_k)\int_{\tau_k}^{\tau}\E_k\|b_s\|^2\,\d s \\ 
&\hspace{2.5cm}+2\int_{\tau_k}^{\tau}\E_k\|\Sigma_s\|_{\mathrm{F}}^2\,\d s.
\end{align*}
Applying \eqref{eq:bs-Sigmas-linear-growth} and then \eqref{eq:second-moment-bound} yields
\[
\E_k\|\vct x_\tau-\vct x_k\|^2
\le C_k^{(\Delta)}\,(\tau-\tau_k)\left(1+\|\vct x_k\|^2+\|\tilde{\vct u}_k\|^2\right),
\]
after absorbing deterministic constants into $C_k^{(\Delta)}$.
\end{proof}

\begin{lem}[Lipschitz variance bound]\label{lem:lipschitz-variance}
Let $h>0$ and let $\psi:[0,h]\to \R^{p}$ be Lipschitz with constant $L$.
Let
\[
\bar \psi:=\frac{1}{h}\int_0^h \psi(t)\,\d t.
\]
Then
\begin{equation}\label{eq:lipschitz-variance}
\int_0^h \|\psi(t)-\bar \psi\|^2\,\d t \le \frac{L^2}{12}h^3.
\end{equation}
\end{lem}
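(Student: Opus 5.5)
The plan is to reduce the statement to a pointwise bound on $\|\psi(t)-\bar\psi\|$ and then compute an elementary integral. The key observation is that $\bar\psi$ is itself an average of $\psi$, so
\[
\psi(t)-\bar\psi=\frac{1}{h}\int_0^h\big(\psi(t)-\psi(s)\big)\,\d s .
\]

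\textbf{Main approach (symmetrization).} Expanding the square and using the definition of $\bar\psi$ gives the variance identity
\[
\int_0^h\|\psi(t)-\bar\psi\|^2\,\d t=\frac{1}{2h}\int_0^h\!\!\int_0^h\|\psi(t)-\psi(s)\|^2\,\d s\,\d t .
\]
This follows because the cross terms collapse to $-h\|\bar\psi\|^2$ on the left and to $-h^2\|\bar\psi\|^2$ inside the double integral. The Lipschitz property then gives $\|\psi(t)-\psi(s)\|^2\le L^2(t-s)^2$. The double integral of $(t-s)^2$ over $[0,h]^2$ equals $h^4/6$. Dividing by $2h$ yields exactly $L^2h^3/12$.

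The only step requiring care is verifying the variance identity with the correct constant. I will expand $\|\psi(t)-\psi(s)\|^2=\|\psi(t)\|^2+\|\psi(s)\|^2-2\psi(t)^\intercal\psi(s)$ and integrate in both variables. This gives $2h\int_0^h\|\psi\|^2-2h^2\|\bar\psi\|^2$. On the other side, $\int_0^h\|\psi-\bar\psi\|^2=\int_0^h\|\psi\|^2-h\|\bar\psi\|^2$, so the two sides differ by exactly the factor $2h$.

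\textbf{Why not a pointwise bound.} A cruder route would bound $\|\psi(t)-\bar\psi\|\le \frac{L}{h}\int_0^h|t-s|\,\d s$ and then integrate. This gives $\frac{L^2}{h^2}\int_0^h\big(\tfrac{t^2+(h-t)^2}{2}\big)^2\d t$, which produces a constant larger than $1/12$ (namely $7/60$). The symmetrized identity avoids this loss, so it is the route I will use.

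Lipschitz continuity implies continuity, so all integrals are well defined and no further technicalities arise.
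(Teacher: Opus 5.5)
Your proposal is correct and follows essentially the same route as the paper: you derive the pairwise identity $\int_0^h\|\psi(t)-\bar\psi\|^2\,\d t=\frac{1}{2h}\int_0^h\int_0^h\|\psi(t)-\psi(s)\|^2\,\d s\,\d t$ by expanding both sides, then apply the Lipschitz bound and $\int_0^h\int_0^h|t-s|^2\,\d s\,\d t=h^4/6$ to obtain $L^2h^3/12$. Your side remark that the cruder pointwise route only yields the constant $7/60$ is also correct.
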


\begin{proof}
We first derive the identity
\begin{equation}\label{eq:variance-pairwise-identity}
\int_0^h \|\psi(t)-\bar\psi\|^2\,\d t
=
\frac{1}{2h}\int_0^h\int_0^h \|\psi(t)-\psi(s)\|^2\,\d s\,\d t.
\end{equation}
Indeed, expanding the square and using the definition of $\bar\psi$,
\begin{align*}
\int_0^h &\|\psi(t)-\bar\psi\|^2\,\d t \\ 
&=
\int_0^h \|\psi(t)\|^2\,\d t
- 2\int_0^h \psi(t)^\top \bar\psi\,\d t
+ \int_0^h \|\bar\psi\|^2\,\d t \\
&=
\int_0^h \|\psi(t)\|^2\,\d t - h\|\bar\psi\|^2.
\end{align*}
On the other hand,
\begin{align*}
\int_0^h&\int_0^h \|\psi(t)-\psi(s)\|^2\,\d s\,\d t \\ 
&=
\int_0^h\int_0^h \Big(\|\psi(t)\|^2+\|\psi(s)\|^2-2\psi(t)^\top\psi(s)\Big)\,\d s\,\d t \\
&=
2h\int_0^h \|\psi(t)\|^2\,\d t
-2\left\|\int_0^h \psi(t)\,\d t\right\|^2 \\
&=
2h\int_0^h \|\psi(t)\|^2\,\d t - 2h^2\|\bar\psi\|^2.
\end{align*}
Dividing by $2h$ yields \eqref{eq:variance-pairwise-identity}.
Now apply the Lipschitz bound $\|\psi(t)-\psi(s)\|^2 \le L^2|t-s|^2$.
Using \eqref{eq:variance-pairwise-identity}, it follows that
\begin{align*}
\int_0^h \|\psi(t)-\bar\psi\|^2\,\d t
&\le
\frac{L^2}{2h}\int_0^h\int_0^h |t-s|^2\,\d s\,\d t \\
&=
\frac{L^2}{2h}\cdot \frac{h^4}{6}
=
\frac{L^2}{12}h^3,
\end{align*}
which proves \eqref{eq:lipschitz-variance}.
\end{proof}

%%%%%%%%%%%%%%%%%%%%%%%%%%%%%%%%%%%%%%%%%%%%%%%%%%%%%%%%%%%%%%%%%%%%%%%%%%%%%%%%

\subsection{Proof of Theorem~\ref{thm:one-step-error}}\label{app:proof-thm1}
\begin{proof}
\textit{Step 1 (freezing error).}
Subtracting the frozen update from the exact mild update \eqref{eq:exact-discrete} gives
\begin{align*}
\vct e_{k+1}^{(x)}
=
\sum_{i=1}^d \int_{\tau_k}^{\tau_{k+1}}
\Phi(\tau_{k+1},\tau)\tilde A_\tau^{(i)}(\vct x_\tau-\vct x_k)\,\d \vct w_\tau^{(i)}.
\end{align*}
Define
\[
M_{k,i}(\tau):=\Phi(\tau_{k+1},\tau)\tilde A_\tau^{(i)}.
\]
By conditional It\^{o} isometry,
\begin{align*}
\E_k\|\vct e_{k+1}^{(x)}\|^2
&=
\sum_{i=1}^d \int_{\tau_k}^{\tau_{k+1}}
\E_k\!\left[\|M_{k,i}(\tau)(\vct x_\tau-\vct x_k)\|^2\right]\d\tau \\
&\le
\sum_{i=1}^d \int_{\tau_k}^{\tau_{k+1}}
\|M_{k,i}(\tau)\|^2\,\E_k\|\vct x_\tau-\vct x_k\|^2\,\d\tau.
\end{align*}
By Assumption~\ref{assum:local-C1} and Remark~\ref{rem:local-boundedness}, there exists a deterministic constant $C_{M,k}<\infty$ such that
\[
\|M_{k,i}(\tau)\|\le C_{M,k},\qquad \forall \tau\in\mathcal T_k,\ \forall i=1,\ldots,d.
\]
Applying Lemma~\ref{lem:local-moment-increment}(ii) and using
\[
\int_{\tau_k}^{\tau_{k+1}}(\tau-\tau_k)\,\d\tau=\frac{1}{2}\Delta\tau_k^2
\]
yields
\[
\E_k\|\vct e_{k+1}^{(x)}\|^2
\le
C_k^{(x)}\Delta\tau_k^2\left(1+\|\vct x_k\|^2+\|\tilde{\vct u}_k\|^2\right),
\]
which proves \eqref{eq:freeze-bound}.

\textit{Step 2 (projection error).}
By definition of the projected update,
\[
\vct e_{k+1}^{(p)}
=
\int_{\tau_k}^{\tau_{k+1}} \big(H_k(\tau)-\bar H_k\big)\,\d\vct w_\tau.
\]
Conditional It\^{o} isometry (c.f. Appendix~\ref{app:proj-opt}) gives
\[
\E_k\!\left[\|\vct e_{k+1}^{(p)}\|^2\right]
=
\int_{\tau_k}^{\tau_{k+1}}\|H_k(\tau)-\bar H_k\|_F^2\,\d\tau.
\]
By Assumption~\ref{assum:local-C1} and Remark~\ref{rem:local-boundedness}, the map
$\tau\mapsto H_k(\tau;\vct x_k,\tilde{\vct u}_k)$ is Lipschitz on $\mathcal T_k$ for fixed $(\vct x_k,\tilde{\vct u}_k)$.
Let $L_{H,k}(\vct x_k,\tilde{\vct u}_k)$ denote a corresponding Lipschitz constant.
Applying Lemma~\ref{lem:lipschitz-variance} on an interval of length $\Delta\tau_k$ yields
\[
\E_k\!\left[\|\vct e_{k+1}^{(p)}\|^2\right]
\le \frac{1}{12}L_{H,k}(\vct x_k,\tilde{\vct u}_k)^2\Delta\tau_k^3,
\]
which proves \eqref{eq:proj-bound}.
\end{proof}

%%%%%%%%%%%%%%%%%%%%%%%%%%%%%%%%%%%%%%%%%%%%%%%%%%%%%%%%%%%%%%%%%%%%%%%%%%%%%%%%
\subsection{Proof of Theorem~\ref{thm:moment-prop}}\label{app:proof-moment-prop}
\begin{proof}
Let $\delta\vct x_k \coloneqq \vct x_k-\bar{\vct x}_k$ and $\delta\tilde{\vct u}_k\coloneqq \tilde{\vct u}_k-\E[\tilde{\vct u}_k]$.
From \eqref{eq:approx-discrete-general} and \eqref{eq:Hbar-affine-cols},
\begin{align*}
\vct x_{k+1}
&= A_k\vct x_k + F_k\tilde{\vct u}_k + \vct d_k \nonumber \\
&\hspace{1.5cm}+ \sum_{i=1}^d\left(\tilde A_k^{(i)}\vct x_k+\tilde F_k^{(i)}\tilde{\vct u}_k+\tilde d_k^{(i)}\right)\Delta\vct w_k^{(i)}.
\end{align*}
Taking expectations and using $\E[\Delta\vct w_k^{(i)}]=0$ yields \eqref{eq:mean-prop}.

For the covariance, write $\vct x_k=\bar{\vct x}_k+\delta\vct x_k$ and
$\tilde{\vct u}_k=\E[\tilde{\vct u}_k]+\delta\tilde{\vct u}_k$ and collect terms to obtain the deviation dynamics
\begin{align*}
\delta\vct x_{k+1}
&= A_k\delta\vct x_k + F_k\delta\tilde{\vct u}_k
+ \sum_{i=1}^d q_k^{(i)}\Delta\vct w_k^{(i)} \\
&\hspace{1.5cm}+ \sum_{i=1}^d\left(\tilde A_k^{(i)}\delta\vct x_k+\tilde F_k^{(i)}\delta\tilde{\vct u}_k\right)\Delta\vct w_k^{(i)},
\end{align*}
with $q_k^{(i)}$ as in Theorem~\ref{thm:moment-prop}.
Using independence of $\Delta\vct w_k$ and $\scriptF_{\tau_k}$, $\E[\Delta\vct w_k^{(i)}]=0$, and $\E[(\Delta\vct w_k^{(i)})^2]=\Delta\tau_k$, all cross-terms with odd powers of $\Delta\vct w_k^{(i)}$ vanish, and mixed-channel terms vanish by independence.
A straightforward expansion yields the desired result \eqref{eq:cov-prop}.
\end{proof}

%%%%%%%%%%%%%%%%%%%%%%%%%%%%%%%%%%%%%%%%%%%%%%%%%%%%%%%%%%%%%%%%%%%%%%%%%%%%%%%%

\subsection{Losslessness of the LMI relaxations}~\label{app:lossless}

\begin{thm}~\label{thm:cov-losslessness}
Consider Problem~\ref{prob:penalty-CS-problem}.
Assume $\Sigma_{x_k}\succ 0$ for all $k=0,\ldots,N-1$ and that the problem is feasible.
If the regularization satisfies
\begin{equation}~\label{eq:lossless-sufficient}
    \frac{\partial \scriptJ_{\mathrm{reg}}}{\partial Y_k} \succ 0,\,
    \frac{\partial \scriptJ_{\mathrm{reg}}}{\partial \tilde\Sigma_{ik}} \succ 0,\, \forall k=0,\ldots,N-1,\ \forall i=1,\ldots,d,
\end{equation}
then there exists an optimal solution $z^\star$ of Problem~\ref{prob:penalty-CS-problem} that satisfies the relaxations with equality, i.e.,
\begin{equation}~\label{eq:lossless-equalities}
    Y_k^\star = U_k^\star(\Sigma_{x_k}^\star)^{-1}(U_k^\star)^\intercal,\qquad
    \tilde\Sigma_{ik}^\star = \tilde q_k^{(i)}(z^\star)\,\tilde q_k^{(i)}(z^\star)^\intercal,
\end{equation}
for all $k=0,\ldots,N-1$ and all $i=1,\ldots,d$. In particular, the LMI relaxations \eqref{eq:cov-final-relaxed-2}--\eqref{eq:cov-final-relaxed-3} are lossless under \eqref{eq:lossless-sufficient}.
\end{thm}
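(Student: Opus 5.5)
The plan is a KKT/complementary-slackness argument of the type used for lossless covariance-steering relaxations in \cite{Liu2025dtCS}, applied blockwise to the two families of LMIs \eqref{eq:cov-final-relaxed-2}--\eqref{eq:cov-final-relaxed-3}.

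\textbf{Step 1 (strong duality).} Problem~\ref{prob:penalty-CS-problem} is a convex conic program. I would first argue that strong duality holds and that a primal--dual optimal pair exists, via Slater's condition. A strictly feasible point can be built from a feasible one by inflating $Y_k$ and $\tilde\Sigma_{ik}$ by $\epsilon I$, since $\Sigma_{x_k}\succ 0$. The covariance recursion \eqref{eq:cov-final-relaxed-1} and the boundary conditions \eqref{eq:boundary-cond-final} then have to be restored by readjusting the free variables $U_k$, $\tilde v_k$ and the soft slacks $\xi,\zeta$. I expect this readjustment to be routine for the affine constraints but delicate at the fixed terminal covariance $\Sigma_{x_N}=\Sigma_f$.

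\textbf{Step 2 (multipliers).} I would introduce:
\begin{itemize}
\item a symmetric multiplier $\Lambda_{k+1}$ for each equality \eqref{eq:cov-final-relaxed-1};
\item PSD multipliers $M_k=\begin{bmatrix}M_k^{11}&M_k^{12}\\ \star & M_k^{22}\end{bmatrix}\succeq 0$ for \eqref{eq:cov-final-relaxed-2};
\item PSD multipliers $N_{ik}=\begin{bmatrix}N_{ik}^{11}&n_{ik}\\ \star&\nu_{ik}\end{bmatrix}\succeq 0$ for \eqref{eq:cov-final-relaxed-3};
\item scalar multipliers for the chance-constraint surrogates \eqref{eq:linearized-chance-ineq-buffer2}, which involve $Y_k$ through $a_j^\intercal Y_k a_j$.
\end{itemize}

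\textbf{Step 3 (stationarity in $Y_k$ and $\tilde\Sigma_{ik}$).} These variables enter the Lagrangian linearly, which gives
\begin{equation*}
M_k^{11}=\frac{\partial \scriptJ_{\mathrm{reg}}}{\partial Y_k}-F_k^\intercal\Lambda_{k+1}F_k-\Delta\tau_k\sum_{i}(\tilde F_k^{(i)})^\intercal\Lambda_{k+1}\tilde F_k^{(i)}+\sum_j\lambda_{u,j,k}a_ja_j^\intercal,
\end{equation*}
\begin{equation*}
N_{ik}^{11}=\frac{\partial \scriptJ_{\mathrm{reg}}}{\partial \tilde\Sigma_{ik}}-\Delta\tau_k\Lambda_{k+1}.
\end{equation*}
Complementary slackness gives $M_k\,\mathcal M_k=0$, where $\mathcal M_k$ is the LMI matrix in \eqref{eq:cov-final-relaxed-2}, and similarly for $N_{ik}$.

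\textbf{Step 4 (rank argument).} If $M_k^{11}\succ 0$, then $\operatorname{rank}M_k\ge m+1$. Hence $\operatorname{rank}\mathcal M_k\le n$. Since $\Sigma_{x_k}\succ0$ already forces $\operatorname{rank}\mathcal M_k\ge n$, the Schur complement $Y_k-U_k\Sigma_{x_k}^{-1}U_k^\intercal$ must vanish. The identical argument with $N_{ik}^{11}\succ 0$ forces $\tilde\Sigma_{ik}=\tilde q_k^{(i)}(\tilde q_k^{(i)})^\intercal$.

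\textbf{Main obstacle: the sign of $\Lambda_{k+1}$.} The multiplier of an equality is sign-indefinite, so \eqref{eq:lossless-sufficient} alone does not obviously make $M_k^{11}$ and $N_{ik}^{11}$ positive definite. My first attempt would be a monotonicity/perturbation argument in place of pure KKT. Take an optimal solution with slack $D_k:=Y_k-U_k\Sigma_{x_k}^{-1}U_k^\intercal\succeq 0$, $D_k\ne 0$, and replace $Y_k$ by $Y_k-D_k$. This strictly decreases $\scriptJ_{\mathrm{reg}}$ by \eqref{eq:lossless-sufficient} and loosens \eqref{eq:linearized-chance-ineq-buffer2}. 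It also decreases $\Sigma_{x_{k+1}}$ by a PSD amount, which has to be compensated downstream. I would try to absorb that decrease by increasing $\tilde\Sigma_{i,k}$ or adjusting $U_{k+1}$ so that the later covariances, and in particular $\Sigma_{x_N}=\Sigma_f$, are unchanged. The question is whether the net cost change is nonpositive; this is where the weighting $\epsilon_{\tilde\Sigma}\ll$ the weight on $Y_k$ would enter.

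If that compensation fails, I would fall back to working with an optimal solution of minimal total slack. I would then show, via the KKT system, that the dual variable $\Lambda_{k+1}$ can be chosen negative semidefinite whenever the terminal covariance constraint is active in the direction $\Sigma_{x_N}\preceq\Sigma_f$, using the over-approximation property $\Sigma_{x_k}\succeq P_k$ noted in Section~\ref{subsec:convex-reduction}. This is the step I expect to require an additional argument or a mild extra hypothesis beyond the statement as given.
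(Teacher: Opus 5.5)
As written, your proposal is not yet a proof. The decisive step is left open: you need either positive definiteness of $M_k^{11}$ and $N_{ik}^{11}$, or a downstream compensation of the PSD drop in $\Sigma_{x_{k+1}}$ that does not raise the cost. Your Slater construction in Step~1 runs into the same coupling. Conditional on that step, your rank argument in Step~4 is correct.

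You have, however, isolated exactly the right obstacle, and the paper's proof does not overcome it. The paper argues by feasible descent. It replaces $Y_k^\star$ by $Y_k^\star-\varepsilon\Delta_k$ (and $\tilde\Sigma_{ik}^\star$ by $\tilde\Sigma_{ik}^\star-\varepsilon\Delta_{ik}$) with \emph{all other variables fixed}, on the grounds that these variables enter Problem~\ref{prob:penalty-CS-problem} only through the LMIs and $\scriptJ_{\mathrm{reg}}$. That premise is false. Both appear in the equality \eqref{eq:cov-final-relaxed-1}, through $F_kY_kF_k^\intercal$, $\Delta\tau_k\tilde F_k^{(i)}Y_k(\tilde F_k^{(i)})^\intercal$ and $\Delta\tau_k\tilde\Sigma_{ik}$. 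The perturbed point therefore violates \eqref{eq:cov-final-relaxed-1} (always, in the $\tilde\Sigma_{ik}$ case). This is precisely the downstream effect you flagged, so there is nothing in the paper's argument you could borrow to finish.

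In fact, no argument can close the gap from \eqref{eq:lossless-sufficient} alone: with the terminal equality $\Sigma_{x_N}=\Sigma_f$ the statement fails. Take $n=d=N=1$, no chance constraints, and $f\equiv 0$, $g\equiv 0$. Then $A_0=1$, $F_0=0$, $\tilde A_0^{(1)}=0$, $\tilde F_0^{(1)}=0$, $\tilde d_0^{(1)}=0$, so $\tilde q_0^{(1)}=0$. Together with \eqref{eq:boundary-cond-final}, the recursion \eqref{eq:cov-final-relaxed-1} reads $\Sigma_f=\Sigma_i+\Delta\tau_0\tilde\Sigma_{10}$. If $\Sigma_f>\Sigma_i$, every feasible point has $\tilde\Sigma_{10}>0=\tilde q_0^{(1)}(\tilde q_0^{(1)})^\intercal$. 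Yet the problem is feasible (the mean mismatch is absorbed by $\xi_0$), and \eqref{eq:baseline-obj} with $R\succ 0$ satisfies \eqref{eq:lossless-sufficient}. In your KKT notation, complementary slackness forces $N_{10}=0$, so $N_{10}^{11}\succ 0$ indeed fails. The same happens with genuine control authority whenever $\Sigma_f$ demands enough covariance growth, because inflating $\tilde\Sigma_{ik}$ costs only $\epsilon_{\tilde\Sigma}/\Delta\tau_k$ per unit of covariance.

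So the extra hypothesis you anticipated is genuinely needed. One clean repair has two parts:
\begin{itemize}
\item impose $\Sigma_{x_N}\preceq\Sigma_f$, which is all the over-approximation remark in Section~\ref{subsec:convex-reduction} actually uses;
\item assume $\scriptJ_{\mathrm{reg}}$ is nondecreasing in each $\Sigma_{x_j}$.
\end{itemize}
Then no compensation is needed. Remove a fraction $\varepsilon$ of the slack at stage $k$. For $j>k$, freeze $\tilde K_j=U_j\Sigma_{x_j}^{-1}$, set $U_j'=\tilde K_j\Sigma_{x_j}'$ and $Y_j'=\tilde K_j\Sigma_{x_j}'\tilde K_j^\intercal+(Y_j-\tilde K_j\Sigma_{x_j}\tilde K_j^\intercal)$, and keep $\tilde\Sigma_{ij}$ unchanged. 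With these choices \eqref{eq:cov-final-relaxed-1} is order-preserving, so all downstream $\Sigma_{x_j}$ and $Y_j$ decrease in the PSD order. The terminal inequality and \eqref{eq:linearized-chance-ineq-buffer1}--\eqref{eq:linearized-chance-ineq-buffer2} stay satisfied, positive definiteness is kept for small $\varepsilon$, and the cost strictly drops.
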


\begin{proof}
We prove losslessness by contradiction using a feasible-descent ($\varepsilon$-) argument.
The key observation is that the relaxation variables $(Y_k,\tilde\Sigma_{ik})$ enter Problem~\ref{prob:penalty-CS-problem} only through the semidefinite constraints \eqref{eq:cov-final-relaxed-2}--\eqref{eq:cov-final-relaxed-3} and through the objective via $\scriptJ_{\mathrm{reg}}$.

\paragraph{Step 1 (tightness of \eqref{eq:cov-final-relaxed-2})}
Fix $k\in\{0,\ldots,N-1\}$ and consider an optimal solution $z^\star$ of Problem~\ref{prob:penalty-CS-problem}.
Denote $U_k^\star$ and $\Sigma_{x_k}^\star\succ 0$ the corresponding values.
Since $\Sigma_{x_k}^\star\succ 0$, the LMI \eqref{eq:cov-final-relaxed-2} is equivalent (by Schur complement) to
\[
Y_k^\star \succeq U_k^\star(\Sigma_{x_k}^\star)^{-1}(U_k^\star)^\intercal.
\]
Define the slack
\[
\Delta_k \coloneqq Y_k^\star - U_k^\star(\Sigma_{x_k}^\star)^{-1}(U_k^\star)^\intercal \succeq 0.
\]
If $\Delta_k=0$ we are done. Suppose for contradiction that $\Delta_k\neq 0$.
For any $\varepsilon\in(0,1]$, define the perturbed variable
\[
Y_k(\varepsilon) \coloneqq Y_k^\star - \varepsilon \Delta_k.
\]
We verify that $Y_k(\varepsilon)$ preserves feasibility. Using the definition of $\Delta_k$,
\begin{align*}
Y_k(\varepsilon)
&= Y_k^\star - \varepsilon\!\left(Y_k^\star - U_k^\star(\Sigma_{x_k}^\star)^{-1}(U_k^\star)^\intercal\right) \\ 
&= (1-\varepsilon)Y_k^\star + \varepsilon\,U_k^\star(\Sigma_{x_k}^\star)^{-1}(U_k^\star)^\intercal.
\end{align*}
Since $Y_k^\star \succeq U_k^\star(\Sigma_{x_k}^\star)^{-1}(U_k^\star)^\intercal$ and $1-\varepsilon\ge 0$, it follows that
\[
Y_k(\varepsilon)\succeq U_k^\star(\Sigma_{x_k}^\star)^{-1}(U_k^\star)^\intercal,
\]
hence \eqref{eq:cov-final-relaxed-2} remains satisfied with all other decision variables fixed.
Moreover, $Y_k(\varepsilon)\succeq 0$ because it is the sum of the PSD matrix $U_k^\star(\Sigma_{x_k}^\star)^{-1}(U_k^\star)^\intercal$ and the PSD matrix $(1-\varepsilon)\Delta_k$.
Thus, replacing $Y_k^\star$ by $Y_k(\varepsilon)$ yields a feasible point of Problem~\ref{prob:penalty-CS-problem} for every $\varepsilon\in(0,1]$.

Now consider the objective change. Since $\scriptJ_{\mathrm{reg}}$ is convex and differentiable in $Y_k$ and satisfies $\frac{\partial \scriptJ_{\mathrm{reg}}}{\partial Y_k}\succ 0$, the directional derivative of $\scriptJ_{\mathrm{reg}}$ at $Y_k^\star$ along $-\Delta_k$ is strictly negative:
\[
\left.\frac{\mathrm{d}}{\mathrm{d}\varepsilon}\scriptJ_{\mathrm{reg}}\big(Y_k^\star-\varepsilon\Delta_k,\tilde\Sigma^\star\big)\right|_{\varepsilon=0}
\hspace{-12pt}= -\left\langle \frac{\partial \scriptJ_{\mathrm{reg}}}{\partial Y_k}(Y_k^\star,\tilde\Sigma^\star),\Delta_k\right\rangle < 0.
\]
Therefore, for sufficiently small $\varepsilon>0$, replacing $Y_k^\star$ by $Y_k(\varepsilon)$ strictly decreases the objective while preserving feasibility, contradicting optimality of $z^\star$.
Hence $\Delta_k=0$, proving $Y_k^\star = U_k^\star(\Sigma_{x_k}^\star)^{-1}(U_k^\star)^\intercal$.

\paragraph{Step 2 (tightness of \eqref{eq:cov-final-relaxed-3})}
Fix $(i,k)$ and consider the corresponding optimal values in $z^\star$.
Since the scalar bottom-right entry in \eqref{eq:cov-final-relaxed-3} is $1>0$, the Schur complement yields
\[
\tilde\Sigma_{ik}^\star \succeq \tilde q_k^{(i)}(z^\star)\,\tilde q_k^{(i)}(z^\star)^\intercal.
\]
Define the slack
\[
\Delta_{ik}\coloneqq \tilde\Sigma_{ik}^\star - \tilde q_k^{(i)}(z^\star)\,\tilde q_k^{(i)}(z^\star)^\intercal \succeq 0.
\]
If $\Delta_{ik}\neq 0$, define $\tilde\Sigma_{ik}(\varepsilon)\coloneqq \tilde\Sigma_{ik}^\star-\varepsilon\Delta_{ik}$ for $\varepsilon\in(0,1]$.
As above, feasibility of \eqref{eq:cov-final-relaxed-3} is preserved for all such $\varepsilon$.
Since $\frac{\partial \scriptJ_{\mathrm{reg}}}{\partial \tilde\Sigma_{ik}}\succ 0$, the directional derivative of the regularizer along $-\Delta_{ik}$ is strictly negative, so for small $\varepsilon>0$ the objective strictly decreases while maintaining feasibility, contradicting optimality.
Therefore $\Delta_{ik}=0$ and $\tilde\Sigma_{ik}^\star = \tilde q_k^{(i)}(z^\star)\,\tilde q_k^{(i)}(z^\star)^\intercal$.

Applying Steps 1--2 for all indices yields \eqref{eq:lossless-equalities} and completes the proof.
\end{proof}
%%%%%%%%%%%%%%%%%%%%%%%%%%%%%%%%%%%%%%%%%%%%%%%%%%%%%%%%%%%%%%%%%%%%%%%%%%%%%%%%

\subsection{Numerical SDE Solution}~\label{app:sde-numerical-int}
In the numerical examples in Section~\ref{sec:examples}, it is necessary to compute the solution to a continuous-time SDE to demonstrate the efficacy of the proposed FFT-iCS solution.
To this end, we use Milstein's method to approximate the solution to the SDE \eqref{eq:nl-sde-scaled}, with both weak and strong order of convergence $O(\Delta\tau)$ \cite{KloedenPlaten1992NSDE}.
To this end, over a uniform partition $\scriptP = (\tau_0, \ldots, \tau_N)$ with interval width $\Delta\tau = 1/N$, the approximate solution is given by the following recursion.
For every Monte Carlo run $j = 1,\ldots, N_{\mathrm{mc}}$, we first sample the initial state as $x_0^{(j)}$ from $\scriptN(\mu_i,\Sigma_i)$.
The state at the end time step is then given by
\begin{align}
    x_{k+1}^{(j)} &= x_k^{(j)} + \sigma_k^\star f(x_k^{(j)}, u_k^{(j)}) \Delta\tau + \sqrt{\sigma_k^\star} g(x_k^{(j)}, u_k^{(j)}) \Delta w_k^{(j)} \nonumber\\ 
    &\hspace{-0.5cm}+ \sum_{i=1}^{d}\frac{\sigma_k^\star}{2} g_i(x_k^{(j)}, u_k^{(j)}) \frac{\partial g_i}{\partial x}(x_k^{(j)}, u_k^{(j)})\big((\Delta w_k^{(i,j)})^2 - \Delta\tau\big),~\label{eq:milton-prop}
\end{align}
where $u_k^{(j)} = v_k^\star + K_k^\star (x_k^{(j)} - \bar{x}_k)$ is the optimal control input with $\bar{x}_k$ from \eqref{eq:mean-prop-affine-control}, and $\Delta w_k^{(j)}$ is sampled from the Brownian increment $\mathcal{N}(0,\Delta\tau I_d)$.
%%%%%%%%%%%%%%%%%%%%%%%%%%%%%%%%%%%%%%%%%%%%%%%%%%%%%%%%%%%%%%%%%%%%%%%%%%%%%%%%

\bibliographystyle{IEEEtran}
\bibliography{refs.bib}

\end{document}